\documentclass[11 pt,a4 paper, oneside]{amsart}
\usepackage{latexsym}
\usepackage[top=2.5cm, bottom=3cm, left=2.5cm, right=2.5cm]{geometry}
\usepackage[utf8]{inputenc}

\usepackage[T1]{fontenc}

\usepackage[english]{babel}

\usepackage{bm}
\usepackage{amsthm}
\usepackage{amssymb}
\usepackage{amsmath}
\usepackage{amsrefs}
\usepackage{indentfirst}
\usepackage{graphicx}
\usepackage{mathrsfs}

\usepackage{enumitem}

\usepackage[hidelinks]{hyperref}
\hypersetup{
	colorlinks,
	linkcolor={blue},
	citecolor={blue},
	urlcolor={black}
}

\usepackage{caption}
\usepackage{changepage}
\usepackage{cleveref}

\usepackage{comment}

\usepackage{rotating}
\usepackage{float}
\usepackage{flafter} 
\usepackage{tabularx}

\usepackage{tikz} 
\usepackage{tikz-cd}
\usepackage{amsfonts}
\newtheorem{theorem}{Theorem}[section]
\newtheorem{lemma}[theorem]{Lemma}
\newtheorem{proposition}[theorem]{Proposition}
\newtheorem{corollary}[theorem]{Corollary}
\theoremstyle{definition}
\newtheorem{definition}[theorem]{Definition}
\newtheorem{remark}[theorem]{Remark}
\newtheorem{thmintro}{Theorem}

\title{Strong formality below middle degree implies strong formality}
\author{Lapo Rubini}
\address[Lapo Rubini]{Dipartimento Di Matematica E Informatica “Ulisse Dini”, Università Degli Studi di Firenze, viale Morgagni 67/a, 50134, Florence, Italy}
\email{lapo.rubini@unifi.it}

\newcommand{\C}{\mathbb{C}}

\newcommand{\CP}{\mathbb{CP}}
\newcommand{\ddbar}{\partial\bar{\partial}}

\thanks{The author has been supported by the PRIN 2022 project "Real and Complex Manifolds: Geometry and holomorphic dynamics" (code	2022AP8HZ9) and partially supported by GNSAGA-INdAM}

\begin{document}

\begin{abstract}
	We show that hyperplane sections of strongly formal manifolds inherit strong formality. In particular, this property holds for generalized complete intersections defined by positive line bundles with trivial first de Rham cohomology group. Furthermore, we establish the strong formality of compact K\"ahler manifolds with central cohomology of width $\frac{n}{2}-1$ and, more generally, of compact $\ddbar$-manifolds with no non-trivial multiplicative relations in cohomology below degree $n+2$. These results arise from the notion of $s$-strong formality, which we adapt from a work of Fernandez and Muñoz to the pluripotential setting. Specifically, we prove that a compact, connected complex manifold of dimension $n$ with trivial first de Rham cohomology group is strongly formal if and only if it is $(n-1)$-strongly formal.
\end{abstract}

\pagestyle{plain}
	\maketitle

\section*{Introduction}
In \cite{Sullivan75}, Sullivan introduced the classical notion of formality for differentiable manifolds: a manifold is formal if its cdga of forms is quasi-isomorphic to its de Rham cohomology algebra with zero differential. In the remarkable paper \cite{DeligneGriffiths1975}, Deligne, Griffiths, Morgan, and Sullivan proved, among other fundamental results, that all compact K\"ahler manifolds are formal in the sense of Sullivan. This is achieved by means of the $\ddbar$-Lemma, a property of independent interest for non-K\"ahler manifolds: namely a compact complex manifold satisfies the $\ddbar$-Lemma if every $d$-exact complex form that is both $\partial$ and $\bar{\partial}$-closed is also $\ddbar$-exact. While this property establishes a connection between complex geometry and Sullivan's formality, the latter is independent from the complex structure of the manifold. 

The introduction of Dolbeault formality in \cite{Neisendorfer78} represented an initial attempt to provide a complex analogue to Sullivan's formality. However, the more recent notion of strong formality defined in \cite{milivojevic2024bigradednotionsformalityaepplibottchernmassey} is symmetric in the two differentials $\partial$ and $\bar{\partial}$, providing a more comprehensive parallel for complex manifolds. A strongly formal manifold has its cbba of complex differential forms that is pluripotentially quasi-isomorphic to its Bott-Chern cohomology algebra with zero differentials. This condition implies, in particular, that the Hodge structure on the homotopy groups of the manifold is determined by the Hodge structure of its cohomology ring (see \cite[\S5.4]{placini2025strongformalitytorichomogeneous}, \cite[\S2.3.1]{Stelzig_2025}). An obstruction to strong formality is provided by the presence of non-zero ABC-Massey products (see \cite{milivojevic2024bigradednotionsformalityaepplibottchernmassey, Angella_2015, cesarino2025aepplibottchernmasseyproductsnonkahler, placini2024nontrivialmasseyproductscompact, sferruzza2024bottchernformalitymasseyproducts, SFERRUZZA2022104470, martínmerchán2025holomorphiclinkingnumbersabc, tardini2015geometricbottchernformalitydeformations}). In contrast to the differentiable case, compact K\"ahler manifolds and, more generally, compact $\ddbar$-manifolds, are not necessarily strongly formal (see \cite{placini2024nontrivialmasseyproductscompact, SFERRUZZA2022104470, martínmerchán2025holomorphiclinkingnumbersabc}).

Since the $\ddbar$-Lemma is a necessary condition for a compact manifold to be strongly formal, it is interesting to investigate which classes of compact $\ddbar$-manifolds are guaranteed to be strongly formal. Recent works \cite{placini2025strongformalitytorichomogeneous, RUBINI2025105586, sferruzza2024bottchernformalitymasseyproducts, sferruzza2025hermitiangeometricallyformalmanifolds, Stelzig_2025} have provided positive answer for several classes: complete smooth complex toric varieties; homogeneous compact K\"ahler manifolds; splitting-type solvmanifolds of the form ${\C}^n\ltimes{\C}^m/\Gamma$ satisfying the $\ddbar$-Lemma; compact K\"ahler solvmanifolds; compact geometrically Aeppli formal manifolds; compact K\"ahler manifolds of dimension $n\ge2$ with the Hodge diamond of complete intersection type.

This paper aims to provide some techniques for establishing the strong formality of further classes of $\ddbar$-manifolds. In \cite{fernandezmunoz}, Fernandez and Muñoz define the weaker notion of $s$-formality, providing that for connected, orientable and compact differentiable manifolds of dimension $2n$ or $2n-1$, formality is equivalent to $(n-1)$-formality.

As a first step, aiming to adapt the results in \cite{fernandezmunoz} to the context of strong formality, we establish the following characterization.

\begin{thmintro}[Theorem \ref{characterization}]\label{charintro}
	A minimal cbba $(\mathcal{M}, \partial, \bar{\partial})$ is strongly formal if and only if it satisfies the $\ddbar$-Lemma and there exists a bigraded vector space of generators $V$, $\mathcal{M}=\Lambda V$, which decomposes as $V=C\oplus N$ as bigraded vector spaces such that:
	\begin{enumerate}[label=(\roman*)]
		\item $d(C^{p,q})=0$ for all $p,q$;
		\item $d$ is injective on each $N^{p,q}$;
		\item each $\ddbar$-closed element in the ideal generated by $\bigoplus_{p,q}N^{p,q}$ is in $\partial\mathcal{M}+\bar{\partial}\mathcal{M}$;
		\item each $\partial$ and $\bar{\partial}$-closed element in the ideal generated by $\bigoplus_{p,q}N^{p,q}$ is in $\ddbar\mathcal{M}$.
	\end{enumerate}
\end{thmintro}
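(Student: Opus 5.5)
The plan is to adapt the Deligne--Griffiths--Morgan--Sullivan characterization of formal minimal algebras to the bigraded setting. Throughout I use the standard facts from \cite{milivojevic2024bigradednotionsformalityaepplibottchernmassey}: a cbba is strongly formal iff it is connected by pluripotential quasi-isomorphisms to $(H_{BC},0,0)$; a cbba satisfying the $\ddbar$-Lemma has $H_{BC}\cong H_{dR}$, and the same holds for $H_{A}$, $H_{\partial}$, $H_{\bar\partial}$; and minimal models are unique up to isomorphism. The two directions are proved separately.

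\emph{($\Leftarrow$).} Assume the $\ddbar$-Lemma and a decomposition $V=C\oplus N$ with (i)--(iv). Let $I$ be the ideal generated by $N$. Since $\mathcal{M}=\Lambda C\otimes\Lambda N$, we have $\mathcal{M}=\Lambda C\oplus I$. I will check that the projection $\pi\colon\mathcal{M}\to\Lambda C$ with kernel $I$ is a morphism of bigraded algebras.

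\begin{enumerate}[label=\arabic*.]
\item By (i), $\partial$ and $\bar\partial$ vanish on $C$, so $\Lambda C$ with zero differentials is a sub-cbba.
\item I need $I$ to be a differential ideal. This is not automatic, and I expect it to be the main obstacle. As in DGMS, I would first modify $C$ by adding to each generator a suitable element of $I$, so that $\partial I\subset I$ and $\bar\partial I\subset I$. Alternatively, I would show that it suffices to work with $\pi$ composed with the projection onto Bott--Chern cohomology.
\end{enumerate}

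In fact the cleaner target is a direct map $\varphi\colon\mathcal{M}\to (H_{BC}(\mathcal{M}),0,0)$. Define it on generators: send each $c\in C$ to its Bott--Chern class $[c]$, which makes sense because $c$ is $\partial$- and $\bar\partial$-closed, and send each $N$ to $0$. Extend multiplicatively.

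Compatibility with the differentials means $\varphi(\partial x)=0=\varphi(\bar\partial x)$ for every $x$. It suffices to check this on generators. For $n\in N$, the element $\partial n$ lies in $\Lambda C\oplus I$; I would argue that $\partial n$ is $\partial$-closed and $\bar\partial$-closed up to terms controlled by $I$. Condition (iv), together with the $\ddbar$-Lemma, then forces its $\Lambda C$-component to be a Bott--Chern boundary. This is where the induction on the minimal filtration of $V$ enters, ordering generators so that $\partial n,\bar\partial n$ only involve earlier generators.

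\emph{$\varphi$ is a quasi-isomorphism.} Surjectivity on $H_{BC}$: every Bott--Chern class of $\mathcal{M}$ can be written as $\alpha+\beta$ with $\alpha\in\Lambda C$ and $\beta\in I$ $\ddbar$-closed, after subtracting. By (iii), $\beta\in\partial\mathcal{M}+\bar\partial\mathcal{M}$, and by the $\ddbar$-Lemma it is then $\ddbar$-exact, so classes come from $\Lambda C$. Injectivity: if $\varphi(x)=0$ for a $\partial$- and $\bar\partial$-closed $x$, split $x=\alpha+\beta$ as above. Then (iv) gives $\beta\in\ddbar\mathcal{M}$, and $\alpha$ maps to $0$, so $\alpha$ is $\ddbar$-exact. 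Since both sides satisfy the $\ddbar$-Lemma, being an $H_{BC}$-isomorphism implies isomorphism on $H_A$, $H_\partial$ and $H_{\bar\partial}$, so $\varphi$ is a pluripotential quasi-isomorphism.

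\emph{($\Rightarrow$).} If $\mathcal{M}$ is strongly formal, the $\ddbar$-Lemma holds, since it is inherited from $(H,0,0)$ via quasi-isomorphism. Moreover $\mathcal{M}$ is the bigraded minimal model of $(H,0,0)$. I build this model explicitly, stage by stage. First take $C$ to be the space of generators needed for $H$. At each stage, the new generators are killing relations; these form $N$, chosen with $d$ injective on them, which gives (i) and (ii). The model map $\rho\colon\Lambda(C\oplus N)\to H$ sends $N\mapsto0$ and is a pluripotential quasi-isomorphism. So any $\ddbar$-closed (respectively $\partial$- and $\bar\partial$-closed) element of $I$ maps to $0$ in $H$, hence is zero in $H_A$ (respectively $H_{BC}$). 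This is exactly (iii) and (iv). Uniqueness of minimal models transfers the decomposition to $\mathcal{M}$.
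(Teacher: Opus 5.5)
\textbf{The direction ($\Leftarrow$).} You use the same map as the paper ($c\mapsto[c]_{BC}$, $N\mapsto 0$). Your check that it induces the identity on $H_{BC}$ is fine. (The paper verifies $H_A$ directly via (iii) rather than via the $\ddbar$-Lemma on both sides; either works.) However, the one substantive step, $\varphi\circ\partial=\varphi\circ\bar\partial=0$, does not go through as you describe it.

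Write $\partial n=\gamma+\eta$ with $\gamma\in\Lambda C$ and $\eta\in I(N)$. Then $\eta$ is $\partial$-closed, but $\bar\partial\eta=\bar\partial\partial n=-\ddbar n$. This is typically nonzero, for instance when $n$ is a generator killing a Bott--Chern relation and $\partial n$ is itself a generator in $N$. So condition (iv) simply does not apply to $\eta$, and the induction along the minimal filtration you invoke does not make $\eta$ $\bar\partial$-closed.

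The condition that does the job is (iii). Since $\ddbar\partial n=0=\ddbar\gamma$, the element $\eta$ is $\ddbar$-closed, hence $\eta\in\text{Im}\partial+\text{Im}\bar\partial$. Then $\gamma=\partial n-\eta$ is $\partial$- and $\bar\partial$-closed and lies in $\text{Im}\partial+\text{Im}\bar\partial$. By the $\ddbar$-Lemma it is $\ddbar$-exact, i.e.\ $\varphi(\partial n)=[\gamma]_{BC}=0$. This is exactly the paper's argument. It works for $\partial\alpha$, $\bar\partial\alpha$ with arbitrary $\alpha$, so neither a reduction to generators nor an induction is needed.

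\textbf{The direction ($\Rightarrow$).} Here you take a genuinely different route. The paper builds no model. It starts from a quasi-isomorphism $\psi\colon\mathcal{M}\to(H_{BC}(\mathcal{M}),0,0)$ inducing the identity on $H_{BC}$ and corrects the given generators degree by degree. Generators whose $\psi$-image already lies in $\psi(\Lambda V^{<i})$ are shifted by closed decomposables so that $\psi(\hat x)=0$; these form $N$. Generators with $dy\in d(\Lambda V^{<i})$ are shifted by a decomposable $b_y$, forced into pure bidegree by a $\ddbar$-Lemma argument, so that $d\hat y=0$; these form $C$. The paper then checks $V^{p,q}=C^{p,q}\oplus N^{p,q}$ by hand.

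You instead construct a model of $(H,0,0)$ that is split by design and transport the splitting along the isomorphism given by uniqueness of minimal models. Your final step ($\rho(I)=0$ plus quasi-isomorphism gives (iii) and (iv)) coincides with the paper's. Your route avoids the generator surgery. Also, (ii) needs no choice: a $d$-closed element of $N^{p,q}$ has Bott--Chern class mapping to $0$, hence is $\ddbar$-exact, hence decomposable by minimality.

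The cost is that the crux is asserted rather than shown. You need an existence theorem for the bigraded minimal model of $(H,0,0)$ via an inductive construction (Stelzig's algorithm, carried out in the literature under simple-connectivity assumptions the statement does not make), together with uniqueness. You must also verify the bigraded specifics, which differ from the classical case:
\begin{itemize}
\item Since $\Lambda C\to H$ is onto and $H_A(H)=H_{BC}(H)=H$, no generators are needed for Aeppli surjectivity.
\item Killing a Bott--Chern relation $k$ requires generators such as $r,\partial r,\bar\partial r$ with $\ddbar r=k$, none of them closed.
\item For these, $\rho(\partial r)=\rho(\bar\partial r)=0$ is forced, and $\rho(r)$ can be normalized to $0$ by subtracting an element of $\Lambda C$.
\end{itemize}
Finally, $d(\partial r+\bar\partial r)=0$, so $d$ is injective only on each $N^{p,q}$, not on $N$. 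Your phrase ``chosen with $d$ injective on them'' must be read bidegree-wise, which is precisely why (ii) is stated that way.
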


This result, analogous to the characterization of formality proved in \cite[Theorem 4.1]{DeligneGriffiths1975}, allows us to define an {\em $s$-strongly formal minimal cbba} (Definition \ref{sstrong}) by requiring the splitting of generators and the corresponding properties to hold up to degree $s$, alongside with an "up to degree $s$" $\ddbar$-Lemma. Following the approach in \cite{fernandezmunoz}, we provide the following result. Note that for connected and compact $\ddbar$-manifolds with trivial first de Rham cohomology group, the existence of a bigraded minimal model is guaranteed by \cite[Theorem 2.34]{Stelzig_2025}.

\begin{thmintro}[Theorem \ref{half}]\label{thmintros}
	Let $X$ be a compact, connected $\ddbar$-manifold with $H_{dR}^1(X)=0$ and $\dim_{\C}(X)=n$. Then $X$ is strongly formal if and only if $X$ is $(n-1)$-strongly formal.
\end{thmintro}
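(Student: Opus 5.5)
Strong formality trivially implies $(n-1)$-strong formality: the decomposition $V=C\oplus N$ given by Theorem \ref{charintro} works in every degree, and the $\ddbar$-Lemma holds globally. For the converse I will follow Fernandez and Muñoz. Let $\mathcal{M}=\Lambda V$ be the bigraded minimal model of $X$, which exists by \cite[Theorem 2.34]{Stelzig_2025} since $H^1_{dR}(X)=0$. From $(n-1)$-strong formality we have a splitting $V^{\le n-1}=C^{\le n-1}\oplus N^{\le n-1}$ in total degrees at most $n-1$ with properties (i)--(iv) there.

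The first step is to extend this splitting to all degrees. In each degree $k\ge n$, I take $C^k$ to be a bigraded complement of the image of $d$ inside the closed generators $\ker d|_{V^k}$, or simply set $C^k=\ker(d|_{V^k})$, and $N^k$ a bigraded complement of it. Then (i) and (ii) hold automatically, and what must be checked are (iii) and (iv) for elements of the ideal $I(N)$ in every degree. The $\ddbar$-Lemma for $\mathcal{M}$ holds globally, since $X$ is a $\ddbar$-manifold and the model is pluripotentially quasi-isomorphic to the forms.

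The second step uses the real dimension $2n$ and Poincaré duality. I will show that in total degree $k\ge n+1$ every closed element of $I(N)$ of the relevant type is exact in the required sense, i.e. its cohomology class (Bott-Chern, resp.\ Aeppli) vanishes. Following Fernandez and Muñoz, I build a cbba map $\psi:\mathcal{M}\to H_{BC}(X)$ by sending $C^{\le n-1}$ to the classes of its generators and $N^{\le n-1}$ to $0$, and extending to degrees $\ge n$ by choosing values compatibly. Properties (iii) and (iv) in low degree should make $\psi$ well defined and a quasi-isomorphism up to degree $n-1$, and injective in degree $n$.

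To conclude that $\psi$ is a quasi-isomorphism in all degrees, I use that a class in degree $k>n$ is determined by its pairings with classes in degree $2n-k<n$, by Poincaré duality in de Rham cohomology together with the $\ddbar$-Lemma (Bott-Chern cohomology is isomorphic to de Rham cohomology). For a class $a$ of degree $k\ge n+1$ coming from $I(N)$, take any $b$ of degree $2n-k\le n-1$. Then $b$ is represented by an element of $\Lambda C$ up to terms in $I(N)$, and the product $ab$ lies in $I(N)$ in top degree. It therefore remains to see that the top-degree product vanishes in cohomology. This I expect to obtain by writing the $N$-factor as $d$ of something, using injectivity of $d$ on $N$ and the low-degree properties, and applying Stokes.

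The main obstacle is this degree-$n$ boundary region, together with verifying that the pairing argument respects the bigrading and both differentials separately. Concretely, properties (iii) and (iv) must be proved with $\ddbar$-exactness, not merely $d$-exactness. Here I plan to reduce to de Rham exactness via the global $\ddbar$-Lemma: a $\partial$- and $\bar\partial$-closed $d$-exact element is $\ddbar$-exact, and a $\ddbar$-closed element whose Aeppli class vanishes lies in $\partial\mathcal{M}+\bar\partial\mathcal{M}$. That reduces everything to the de Rham vanishing obtained by duality. Finally, I apply Theorem \ref{charintro} to conclude that $X$ is strongly formal.
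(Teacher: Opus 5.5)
Your outline has a genuine gap at exactly the step you flag as the main obstacle. You take $N^k$ to be an arbitrary bigraded complement of the closed generators for $k\ge n$, send $N$ to $0$, and expect a Stokes-type argument to show that every closed top-degree element of $I(N)$ is cohomologically trivial. This cannot work, because that class depends on the choice of complement. Suppose $\eta_0+z\cdot x$ is $\ddbar$-closed, with $x\in N^{p,q}$, $z$ closed, and $\eta_0$ in the ideal of earlier $N$-generators. Let $c$ be a $\partial$- and $\bar\partial$-closed generator of bidegree $(p,q)$. Then $\langle x+c\rangle$ is an equally valid complement, $\eta_0+z\cdot(x+c)$ is still $\ddbar$-closed and lies in the new ideal, and its class in $H^{n,n}_A$ moves by $[z]_A\cdot[c]_{BC}$. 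By duality this shift is non-zero for suitable $z$. The phenomenon is already visible in the Sullivan-model analogue for $(S^3\times S^3)\#(S^2\times S^4)$. Take generators $u$ in degree $2$; $a,b,x$ in degree $3$ with $dx=u^2$; and $v,y_a$ in degree $4$ with $dv=0$, $dy_a=ua$. Then $a\cdot x+y_a\cdot u$ is closed and lies in $I(N)$. Replacing $y_a$ by $y_a+\gamma v$, another legitimate complement of $\ker d$, shifts its class by $\gamma[uv]\neq0$. Moreover, "writing the $N$-factor as $d$ of something" is not available, since elements of $N$ are not closed. Your phrase "extending $\psi$ to degrees $\ge n$ by choosing values compatibly" is exactly where the content of the theorem lies, and it is left unspecified.

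The missing idea is the Fernandez--Mu\~noz correction of generators. The paper carries it out generator by generator in each degree $k=n+r$, $r\ge0$; degree $n$ is included, so there is no separate boundary region.
\begin{itemize}
\item Generators in $\ker\rho$ are first corrected to closed ones. Without this, $\ker(d|_{V^k})$ for the original generators can be too small.
\item For each remaining generator $x_i$, Lemma \ref{technical1} puts every $\ddbar$-closed $(n,n)$-element of the enlarged ideal in the normal form $\eta_0+\tau\cdot x_i+\beta$. Here $\tau$ is both $\partial$- and $\bar\partial$-closed and $\beta\in\text{Im}\partial+\text{Im}\bar\partial$. This is where the bigraded bookkeeping happens: the case analysis on decomposability of $\partial x_i,\bar\partial x_i$ and the exclusion of $x_i^2$-terms. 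It does not follow merely from de Rham vanishing plus the $\ddbar$-Lemma.
\item By the inductive hypothesis on the earlier ideal, $[z]_A\mapsto\lambda$, where $[\eta_0+z\cdot x_i]_A=\lambda[\omega]_A$, is a well-defined linear functional on $H_A^{(n,n)-|x_i|}$.
\item The non-degenerate Bott--Chern/Aeppli pairing represents this functional by some $[\psi_i]_{BC}$. Replacing $x_i$ by $\hat x_i=x_i-\psi_i$ kills it (Lemma \ref{technical2}).
\end{itemize}
Only after this correction does your reduction of other degrees to $(n,n)$, by pairing with Bott--Chern classes, go through. That reduction yields $2n$-strong formality, and hence strong formality by Proposition \ref{prop}.
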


Thanks to this result, we establish the strong formality of several classes of compact complex manifolds. First, we consider compact $n$-dimensional K\"ahler manifolds with central cohomology of width $\frac{n}{2}-1$, namely those whose Hodge diamond (excluding the powers of the K\"ahler class) has non-zero entries only in degrees ranging from $\frac{n}{2}+1$ to $\frac{3n}{2}-1$.

\begin{thmintro}[Theorem \ref{centralcohomology}]\label{thmintros2}
	Any compact K\"ahler manifold $(X,\omega)$, $\dim_{\C} X=n$, with central cohomology of width $\frac{n}{2}-1$, is strongly formal. For $n=4m-1$, the same result holds also if $h^{m,m}(X)=2$.
\end{thmintro}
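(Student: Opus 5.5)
The plan is to reduce to Theorem \ref{half}: show that $X$ has $H^1_{dR}(X)=0$ and is $(n-1)$-strongly formal. Kähler implies the $\ddbar$-Lemma, so a bigraded minimal model $(\Lambda V,\partial,\bar\partial)$ exists by \cite[Theorem 2.34]{Stelzig_2025}. Central cohomology of width $\frac{n}{2}-1$ means that, in degrees $k\le n$, the cohomology is spanned by the powers $[\omega]^j$ (for $k$ even) plus primitive-type classes living only in degrees $\ge \frac{n}{2}+1$. In particular $H^1=0$, and below degree $\frac n2+1$ the cohomology ring is $\C[\omega]$ truncated.

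First I would build the minimal model up to degree $n-1$ explicitly. In degree $2$ I put one generator $x$ of bidegree $(1,1)$ with zero differential, representing $\omega$. Since $[\omega]^j\neq 0$ for $j\le n$, no relation is needed among its powers in degrees $\le n$. Every other generator of degree $\le n-1$ with zero differential is then forced by a cohomology class of degree $k\in[\frac n2+1,n-1]$ that is not a power of $\omega$; these classes form the spaces $C^{p,q}$.

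Next I need the generators that kill cohomology. A product of two such classes has degree $\ge n+2$, so it is outside the range $\le n$ where $(n-1)$-formality requires control. Products $x\cdot c$ with $c$ a central class may vanish in cohomology. Hodge--Riemann/Lefschetz considerations (the Lefschetz map $L^{n-k}$ is injective on $H^k$ for $k\le n$) show that $x\cdot c\neq0$ whenever $\deg c+2\le n$. The potential relations therefore appear only in degree $\ge n+1$, and the corresponding generators of $N$ have degree $\ge n$. Hence up to degree $n-1$ we get $N=0$ and $V^{\le n-1}=C^{\le n-1}$ with $d=0$. Conditions (iii) and (iv) of Definition \ref{sstrong} are then vacuous in that range, and the degree-bounded $\ddbar$-Lemma follows from the $\ddbar$-Lemma of $X$ transported through the model. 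I must check carefully the exact degree bounds in Definition \ref{sstrong}, which likely involve the ideal generated by $N^{\le s}$ intersected with $\Lambda V^{\le s+1}$. The width hypothesis is tuned exactly so that the first possible non-closed generator sits in degree $n$.

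The exceptional case $n=4m-1$ with $h^{m,m}=2$ is where I expect the main obstacle. Here there is an extra class $y$ in $H^{m,m}$, of degree $2m=\frac{n+1}{2}$, below the width range. Its square has degree $4m=n+1$, and the products $x^j y$ have degree $\le n-1$ for small $j$; by Lefschetz injectivity these survive. I would first try to show that the only relation needed in low degree is at most in degree $4m=n+1$, such as $y^2=\lambda x^{2m}+\mu x^m y$ modulo primitive terms. A killing generator for it would then have degree $n$, which is still outside the $(n-1)$-range. The concern is whether $y^2$ can produce something requiring a generator in degree $\le n-1$. A degree count shows it cannot, so $V^{\le n-1}$ is still closed, and the conclusion follows from Theorem \ref{half}.
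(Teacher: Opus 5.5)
\textbf{The degree shift is wrong.} Your bookkeeping is Sullivan's, not the pluripotential one, and this breaks the argument. In a bigraded minimal model, a relation among Bott--Chern classes in total degree $r$ is killed by a generator $\xi$ with $\ddbar\xi$ equal to the relation. So $\xi$ has bidegree shifted by $(-1,-1)$, i.e.\ degree $r-2$, and $\partial\xi,\bar\partial\xi$ are new generators of degree $r-1$; compare $R=K[-1,-1]$ in the proof of Theorem \ref{hypThmAlg}. Hence $N^{\le n-1}=0$ requires that there be no relations in degree $n+1$ either, not just in degrees $\le n$.

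In the main case this does hold, but you need the sharper facts the paper uses. $L$ is injective on $H^k$ for every $k\le n-1$, not only for $k\le n-2$. So, via the Lefschetz decomposition, the first vanishing $L^{n-k+1}P^k=0$ lands in degree $2n-k+2\ge n+2$. Products of two primitive classes also have degree $\ge n+2$. Thus the partial model $\Lambda(x)\otimes\Lambda(\bigoplus_{k\ge n/2+1}P_k)$ with zero differential is a quasi-isomorphism up to degree $n+1$, and the first non-closed generator sits in degree $n$. Your conclusion here is right, but it comes from two compensating errors: relations from degree $n+1$ plus a shift of one, instead of relations from degree $n+2$ plus a shift of two.

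\textbf{The case $n=4m-1$, $h^{m,m}(X)=2$ fails.} Take $y$ the extra generator of bidegree $(m,m)$. Then $[y^2]$ lies in $H^{4m}=H^{n+1}$, which has no primitive part, so $[y^2]_{BC}=\sum_{j>0}[\omega^j\wedge\alpha_{4m-2j}]_{BC}$. The closed element $y^2-\sum_{j>0}x^j\alpha_{4m-2j}$ of the free algebra must therefore be killed by a generator $\xi$ of bidegree $(2m-1,2m-1)$, i.e.\ of degree $n-1$. So $V^{\le n-1}$ is \emph{not} closed, $N^{n-1}=\langle\xi\rangle\neq0$, and conditions (iii)--(iv) of Definition \ref{sstrong} are no longer vacuous. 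Your claim that ``a degree count shows it cannot'' is exactly false.

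The missing step is to verify that the ideal $I_{n-1}$ generated by $\xi,\partial\xi,\bar\partial\xi,\ddbar\xi$ contributes nothing to Bott--Chern or Aeppli cohomology. Write a general element, with coefficients in $\Lambda C^{\le n-1}$, as
\[
\gamma=\xi(a_0+\xi a_1+\partial\xi\, a_2+\bar\partial\xi\, a_3)+\partial\xi(b_0+\bar\partial\xi\, b_1)+\bar\partial\xi\, c_0+\ddbar\xi\, d_0 .
\]
One checks that $\partial\gamma=\bar\partial\gamma=0$ forces $\gamma=\ddbar(\frac{1}{2}\xi^2 b_1+\xi d_0)$, and that $\ddbar\gamma=0$ forces $\gamma\in\mathrm{Im}\,\partial+\mathrm{Im}\,\bar\partial$. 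This computation relies on there being a single such $\xi$, which is where $h^{m,m}=2$ enters.

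Your count cannot be right, since it never uses $h^{m,m}=2$. Applied verbatim to $n=3$ with arbitrary $H^2$, it would show that every compact K\"ahler threefold with $H^1=0$ is strongly formal. That contradicts the simply connected, non-strongly-formal K\"ahler threefold of \cite{placini2024nontrivialmasseyproductscompact}.
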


This result implies, in particular, that Fano threefolds with Picard group of rank $1$ or $2$ and Fano fourfolds with Picard group of rank $1$ are strongly formal.

The same techniques can be applied more in general to manifolds satisfying the $\ddbar$-Lemma and with no multiplicative relations in cohomology below degree $n+2$, beyond those imposed by the graded-commutativity. A concrete example is given by Clemens manifolds with vanishing second Betti number (see \cite{Friedman91, Chi24}). This leads to the following.

\begin{thmintro}[Theorem \ref{generalthm}]\label{thmintros3}
	A compact, connected and holomorphically simply connected $\ddbar$-manifold of complex dimension $n$ with no non-trivial multiplicative relations in cohomology below degree $n+2$ is strongly formal.
\end{thmintro}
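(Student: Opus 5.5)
By Theorem \ref{thmintros}, it suffices to show that $X$ is $(n-1)$-strongly formal. I will build the bigraded minimal model $\mathcal{M}=\Lambda V\to A_X$, which exists by \cite[Theorem 2.34]{Stelzig_2025} since holomorphic simple connectivity gives $H^1_{dR}(X)=0$. Then I will check the conditions of Definition \ref{sstrong} in total degrees $\le n-1$. The $\ddbar$-Lemma up to degree $n-1$ is automatic, because $X$ is a $\ddbar$-manifold and $\mathcal{M}$ is quasi-isomorphic to its forms. So the work is to produce the splitting $V^{\le n-1}=C\oplus N$ with properties (i)–(iv) of Theorem \ref{charintro} up to degree $n-1$.

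The natural choice is to construct $\mathcal{M}$ stage by stage in total degree and to track what is forced.

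In degree $k\le n-1$, I take $C^k$ to be a bigraded lift of the indecomposable part of $H_{BC}^k$. Under the $\ddbar$-Lemma this is isomorphic to $H^{p,q}$, and the lift is chosen as $d$-closed generators mapping to $\ddbar$-harmonic-type representatives, which is possible by the $\ddbar$-Lemma.

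The hypothesis of no non-trivial multiplicative relations below degree $n+2$ says that $\Lambda H^{\le n+1}$, in the graded-commutative sense, injects into $H^*(X)$ in degrees $\le n+1$. Consequently $\Lambda C$ maps injectively to cohomology in degrees $\le n+1$. Since $H^1=0$, every generator has degree $\ge2$, so any product of generators of degree $\le n-1$ lies in degree $\le 2n-2$. We only need to control the range up to degree $n$ or $n+1$ for the relevant $N$-generators, namely those of degree $\le n-1$, whose differentials have degree $\le n$.

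I claim that no generator in $N^{\le n-1}$ is needed. Suppose by induction that $V^{<k}=C^{<k}$ and $\Lambda C^{<k}\to H$ is injective. The only reason to add an $N$-generator of degree $k$ would be to kill a cocycle of degree $k+1\le n$ in $\Lambda C^{<k}$ that becomes zero in cohomology. By hypothesis there is no such cocycle. The same holds for cocycles of pure bidegree, and for the Bott–Chern versus Aeppli corrections used in the bigraded construction of \cite{Stelzig_2025}, because under the $\ddbar$-Lemma all these cohomologies agree with de Rham cohomology on $\Lambda C$, which has zero differential.

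Hence $N^{\le n-1}=0$. Conditions (ii)–(iv) then hold vacuously up to degree $n-1$, and (i) holds by construction. This shows $X$ is $(n-1)$-strongly formal, and Theorem \ref{thmintros} concludes the proof.

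The main obstacle is the bigraded minimal model step. In \cite{Stelzig_2025}, generators are added not only to kill de Rham-type relations but also to repair failures of the map on Bott–Chern and Aeppli cohomology, for example to make classes $\ddbar$-exact. I must verify that, for an algebra $\Lambda C$ with zero differentials whose cohomology maps injectively, and with $X$ a $\ddbar$-manifold, no such corrective generators of degree $\le n-1$ arise. My first attempt is to compare cohomologies: the map $\Lambda C\to A_X$ is injective on $H_{BC}$, $H_A$, and $H_{dR}$ in degree $\le n+1$, and surjective onto indecomposables in degree $\le n-1$. By the $\ddbar$-Lemma on both sides, this forces the minimal-model construction to stabilize with $V^{\le n-1}=C^{\le n-1}$. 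The degree bound $n+2$ in the hypothesis is exactly what accommodates the degree shift by one between a generator and its differential, together with the Aeppli-side check in degree $k+1$.
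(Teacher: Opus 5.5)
Your overall plan is the paper's: lift the indecomposable classes to closed generators, show the minimal model has no non-closed generators in degree $\le n-1$ (so $(n-1)$-strong formality is trivial), and conclude with Theorem \ref{thmintros}. The gap is in the step that is supposed to give $N^{\le n-1}=0$.

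You claim an $N$-generator of degree $k$ is needed only to kill a cocycle of degree $k+1$. That is the Sullivan mechanism, and in the pluripotential setting it is the wrong one. Suppose $V^{<k}=C^{<k}$ and $x\in V^k$ has decomposable $\partial x$. Then $\partial x\in\Lambda C^{<k}$ is $\partial$- and $\bar\partial$-closed and $\partial$-exact. The $\ddbar$-Lemma gives $\partial x=\ddbar y$ with $y\in\Lambda V^{\le k-1}=\Lambda C^{\le k-1}$, hence $\partial x=0$. So your mechanism never occurs.

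What does occur is this. A relation $\rho\in\Lambda C$, i.e. a non-zero product whose Bott--Chern class vanishes on $X$, must become $\ddbar$-exact in the model: $\rho=\ddbar r$ with $|r|=|\rho|-2$. It therefore forces a non-closed generator in degree $\le|\rho|-2$, since otherwise $\ddbar r=0$. Typically this is $r$ itself, with new generators $\partial r,\bar\partial r$ one degree higher; compare $R=K[-1,-1]$ in Step 1 of Theorem \ref{hypThmAlg}. So non-closed generators in degree $\le n-1$ are governed by relations in degree $\le n+1$, and the degree-$(n+1)$ part of the hypothesis is the essential one. Your induction only ever uses degrees $k+1\le n$. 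The closing remark about ``a shift by one together with the Aeppli-side check'' does not supply the missing degree.

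The argument as written proves too much. It applies verbatim to every simply connected compact K\"ahler threefold: for $k=2$, $\Lambda C^{<2}=\C$ has no degree-$3$ cocycles. It would give $N^{\le 2}=0$ and hence strong formality, contradicting the example in \cite{placini2024nontrivialmasseyproductscompact}.

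To repair it, redo the bookkeeping with a shift of two, or argue as the paper does. Take closed generators $C$ lifting the indecomposable classes up to degree $n+1$. Then $\Lambda C\to A_X$ induces isomorphisms on $H_{BC}$ and $H_A$ in degrees $\le n+1$:
\begin{itemize}
\item injectivity there is the no-relations hypothesis;
\item surjectivity holds by construction;
\item $H_{BC}(X)\cong H_A(X)$ by the $\ddbar$-Lemma.
\end{itemize}
Consequently the completion algorithm of \cite[Theorem 2.34]{Stelzig_2025} only adds generators in degrees $\ge n$, for example $r$ of degree $n$ with $\ddbar r$ a relation in degree $n+2$. This yields $V^{\le n-1}=C^{\le n-1}$.
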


Finally, Theorem \ref{thmintros} can be applied under the hypotheses of the Lefschetz hyperplane Theorem.

\begin{thmintro}[Theorem \ref{hypThm}]\label{thmintro4}
	Let $X$ be a compact, connected K\"ahler manifold of complex dimension $n+1\ge3$ with $H_{dR}^1(X)=0$. Let $Y\subseteq X$ be a smooth hypersurface such that the induced line bundle $\mathcal{O}(Y)$ is positive. If $X$ is $(n-1)$-strongly formal, then $Y$ is strongly formal.
\end{thmintro}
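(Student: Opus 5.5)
We deduce strong formality of $Y$ from Theorem \ref{thmintros} (Theorem \ref{half}) applied to $Y$, which has complex dimension $n$. So we need three things: $Y$ is a compact connected $\ddbar$-manifold with $H^1_{dR}(Y)=0$, and $Y$ is $(n-1)$-strongly formal.

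The first two are standard. $Y$ is a smooth complex submanifold of the compact Kähler manifold $X$, so it is compact Kähler, hence satisfies the $\ddbar$-Lemma. Since $\mathcal{O}(Y)$ is positive, $X\setminus Y$ is Stein of complex dimension $n+1$. The Lefschetz hyperplane theorem then says the restriction $i^*\colon H^k(X;\mathbb{C})\to H^k(Y;\mathbb{C})$ is an isomorphism for $k\le n-1$ and injective for $k=n$. Because $n+1\ge 3$, $Y$ is connected and $H^1_{dR}(Y)\cong H^1_{dR}(X)=0$. Hence $Y$ has a bigraded minimal model by \cite[Theorem 2.34]{Stelzig_2025}.

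The core step is to transfer $(n-1)$-strong formality from $X$ to $Y$. The inclusion $i$ induces a morphism of cbba's on forms, and hence a morphism of bigraded minimal models $\phi\colon\mathcal{M}_X\to\mathcal{M}_Y$. On cohomology, $\phi$ is $i^*$, and for $\ddbar$-manifolds Bott–Chern, Aeppli and de Rham cohomology all agree, bidegree by bidegree. Lefschetz therefore gives:
\begin{itemize}
	\item $H^k(\phi)$ is an isomorphism, compatibly with the bigrading, for $k\le n-1$;
	\item $H^n(\phi)$ is injective.
\end{itemize}
By the inductive degree-by-degree construction of bigraded minimal models, $\phi$ can then be chosen to be an isomorphism on generators of total degree $\le n-1$, i.e.\ $V_X^{\le n-1}\cong V_Y^{\le n-1}$. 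The point is that generators in degree $k$ are added to kill kernels in degree $k+1$ and to create cohomology in degree $k$, and these data agree up to $k\le n-1$ thanks to injectivity in degree $n$.

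We then transport the splitting $V_X^{\le n-1}=C\oplus N$ witnessing $(n-1)$-strong formality of $X$ (Definition \ref{sstrong}) to $\mathcal{M}_Y$ through $\phi$. Conditions (i) and (ii) of Theorem \ref{charintro} are immediate, since $\phi$ commutes with $\partial,\bar\partial$ and is bijective on these generators. For (iii) and (iv) in degrees up to $n-1$ (or whatever range the definition demands, possibly including some degree-$n$ control), take a $\ddbar$-closed (resp.\ $\partial$- and $\bar\partial$-closed) element of the ideal generated by $N$ in $\mathcal{M}_Y$. In low degree it lies in $\Lambda(V^{\le n-1})$, so it is $\phi$ of a corresponding element of $\mathcal{M}_X$. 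Apply the property in $X$, then push forward by $\phi$. The same transfer gives the truncated $\ddbar$-Lemma for $\mathcal{M}_Y$, which in any case follows from $Y$ being Kähler.

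The main obstacle is the boundary degree. The definition of $s$-strong formality may require the closedness conditions for elements of total degree up to $s+1$ or more, and such elements may involve products reaching degree $n$, where $\phi$ is only injective on cohomology. I would handle this as follows. An element of degree $\le n$ built from generators of degree $\le n-1$ comes from $\mathcal{M}_X$. Exactness of its image in $\mathcal{M}_Y$ forces, by injectivity of $H^n(\phi)$ (in each of the Bott–Chern, Aeppli and de Rham versions), the corresponding class in $X$ to vanish. Hence the relevant $\ddbar$-, $\partial$- or $\bar\partial$-primitive exists in $X$ and can be pushed forward to $Y$. With all hypotheses verified, Theorem \ref{half} yields that $Y$ is strongly formal.
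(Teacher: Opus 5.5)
Your proposal has a genuine gap, and it sits in the core step. The claim that $\phi$ can be chosen to be an isomorphism $V_X^{\le n-1}\cong V_Y^{\le n-1}$ is the classical Sullivan fact, and it fails for bigraded minimal models. In the pluripotential setting, a $\partial$- and $\bar\partial$-closed element of degree $m$ whose Bott--Chern class has to die must become $\ddbar$-exact. It is therefore killed by a generator of degree $m-2$, not $m-1$. So generators of degree $n-1$ are governed by the kernel in $H^{n+1}_{BC}$, on which the Lefschetz theorem gives no control.

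This really happens. Take $X=\CP^2\times\CP^2$ (so $n=3$; $X$ is strongly formal, hence $2$-strongly formal), and let $Y$ be a smooth divisor of bidegree $(a,b)$ with $a,b\ge1$. Then $H^2(Y)=\langle h_1,h_2\rangle$, while $\dim H^4(Y)=b_2(Y)=2$ by Poincar\'e duality. So $h_1^2,h_1h_2,h_2^2$ satisfy a relation in $H^{2,2}_{BC}(Y)$, explicitly $a^2h_1^2-abh_1h_2+b^2h_2^2=0$. Since the minimal model $\mathcal{M}_Y$ is injective on Bott--Chern cohomology, this quadratic element equals $\ddbar r$ for some $r\in\mathcal{M}_Y^{1,1}=V_Y^{1,1}$ (there are no degree-one generators). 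This $r$ is not in the span of the closed generators $h_1,h_2$, which exhaust $V_X^{\le 2}$. So $\mathcal{M}_Y$ has a generator in degree $n-1$ with no counterpart in $\mathcal{M}_X$. This is exactly the space $R=K[-1,-1]$ in the paper's proof of Theorem \ref{hypThmAlg}.

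Such generators satisfy $\ddbar r\neq0$, and this cannot be corrected by decomposables: in the example there are none in degree $2$, and in general this is the defining property of $K$. Hence they must be placed in $N^{n-1}$. The ideal $I_{n-1}$ of $\mathcal{M}_Y$ is then strictly larger than anything coming from $\mathcal{M}_X$. Conditions (iii) and (iv) of Definition \ref{sstrong} for elements such as $\tau\cdot r$, with $\tau$ closed, are genuinely new statements about $Y$ and cannot be transported from $X$.

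This is where the paper's real work lies, in Step 2 of Theorem \ref{hypThmAlg}. It shows, by induction on the new generators $r_j$, that the enlarged ideal does not contribute to cohomology:
\begin{enumerate}
\item In bidegree $(n,n)$ it uses Lemmas \ref{technical1} and \ref{technical2}. A $\ddbar$-closed element is written as $\eta_0+\tau\cdot r_j+\beta$ with $\tau$ closed. Then $r_j$ is replaced by $r_j-\psi_j$, where $\psi_j$ is closed and chosen via the Serre pairing on $Y$ so that all resulting Aeppli classes in $H^{n,n}_A$ vanish.
\item Lower degrees are reduced to $(n,n)$ by multiplying with Bott--Chern classes of complementary degree and using non-degeneracy of the pairing.
\item Theorem \ref{halfAlg} then concludes.
\end{enumerate}
Your proposal has no substitute for this step. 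Your boundary-degree paragraph is also circular: it starts from exactness of the image in $\mathcal{M}_Y$, which is precisely what has to be proved.
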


As a corollary, we obtain the strong formality of generalized complete intersection manifolds defined by positive line bundles with trivial first de Rham cohomology group.\\

This paper is organized as follows. In Section \ref{section1}, we review the necessary preliminaries, providing the fundamental definitions concerning commutative bigraded bidifferential algebras and strong formality. Section \ref{section2} is devoted to introducing the notion of $s$-strong formality and establishing algebraic results for minimal cbbas; in particular, we provide the proofs of Theorem \ref{charintro} and the algebraic counterparts of Theorems \ref{thmintros} and \ref{thmintro4}, namely Theorems \ref{halfAlg} and \ref{hypThmAlg}. In Section \ref{section3}, these results are applied to the setting of complex geometry. By virtue of Theorem \ref{thmintros}, we establish Theorem \ref{thmintros2} and its generalization, Theorem \ref{thmintros3}. These results are then applied to prove the strong formality of Fano threefolds with Picard group of rank $1$ or $2$, Fano fourfolds with Picard group of rank $1$ and Clemens manifolds. Finally, we provide a proof for Theorem \ref{thmintro4}, concluding with an application to generalized complete intersection manifolds defined by positive line bundles with trivial first de Rham cohomology group.\\

\noindent{\em Acknowledgments.} The author would like to express sincere gratitude to Daniele Angella and Jonas Stelzig for their constant support and for the valuable suggestions that made the completion of this paper possible. 

\section{Preliminaries and notation}\label{section1}

In this section, we recall the fundamental notions and properties of commutative bigraded bidifferential algebras and the concept of strong formality, as introduced in \cite{Stelzig_2025} and \cite{milivojevic2024bigradednotionsformalityaepplibottchernmassey} respectively. For any bigraded object $A$, we adopt the following notation:
\begin{itemize}
	\item $A^i:=\bigoplus_{p+q=i}A^{p,q}$ denotes the total degree $i$ elements;
	\item $A^{\le s}:=\bigoplus_{p+q\le s}A^{p,q}$ denotes the elements of total degree at most $s$;
	\item $A:=\bigoplus_{p,q}A^{p,q}$ denotes the entire bigraded space;
	\item for an element $\eta \in A$, the notation $|\eta|$ denotes either the total degree or the bidegree of $\eta$, depending on the context.
\end{itemize}

\subsection{Commutative bigraded bidifferential algebras.} A bicomplex (or double complex) is a bigraded $\C$-vector space $A=\bigoplus_{p,q}A^{p,q}$ equipped with an endomorphism $d$ of total degree $1$ that decomposes into components $d=\partial+\bar{\partial}$ of bidegrees $(1,0)$ and $(0,1)$ respectively, satisfying $d^2=0$. A bicomplex is degree-wise finite dimensional if $A^{p,q}$ is a finite dimensional complex vector space for all $p,q$. 

A commutative bigraded bidifferential algebra (cbba) over $\C$ is a bicomplex $(A,\partial,\bar{\partial})$ endowed with a product $\wedge:A\times A\to A$ of bidegree $(0,0)$ and a unit $\C\to A$ that makes $A$ into a unital graded-commutative algebra such that $\partial$ and $\bar{\partial}$ satisfy the Leibniz rule with respect to $\wedge$. 

An augmented cbba is a pair $(A,\varepsilon)$, where $A$ is a cbba and $\varepsilon:A\to\C$ is a cbba morphism (here $\C$ is a cbba concentrated in bidegree $(0,0)$). We denote the augmentation ideal by $A^+:=\ker\varepsilon$. A real structure on a cbba $A$ is a $\C$-linear isomorphism of cbbas $\sigma:A\simeq\bar{A}$, where $\bar{A}$ denotes the complex conjugate of the double complex $A$. A cbba $A$ is connected if it is concentrated in non-negative total degree and the unit $\C\to A^0$ is an isomorphism. A cbba $A$ is simply connected if it is connected and $A^1=0$.

The Dolbeault, Bott-Chern and Aeppli cohomologies of a cbba $(B, \partial,\bar{\partial})$ are defined as
\[
	H_{\bar{\partial}}^{\bullet,\bullet}(B):=\dfrac{\ker\bar{\partial}}{\text{Im}\bar{\partial}};\quad H_{BC}^{\bullet,\bullet}(B):=\dfrac{\ker\partial\cap\ker\bar{\partial}}{\text{Im}\ddbar};\quad H_A^{\bullet,\bullet}(B):=\dfrac{\ker\ddbar}{\text{Im}\partial+\text{Im}\bar{\partial}}.
\]

A cbba $(B, \partial,\bar{\partial})$ is said to satisfy the $\ddbar$-Lemma if every pure degree $d$-exact element in $B$ which is both $\partial$ and $\bar{\partial}$-closed is also $\ddbar$-exact. This condition is satisfied on the cbba of complex differential forms of a compact K\"ahler manifold by \cite[Lemma 5.11]{DeligneGriffiths1975}. In the same article, the authors provide several equivalent conditions to this property (cf. \cite[Lemma 5.15, Lemma 5.17]{DeligneGriffiths1975}).

A cbba morphism is a bigraded algebra morphism that commutes with the differentials. A cbba quasi-isomorphism is a cbba morphism $f:A\to B$ such that the induced maps $H_{BC}(f)$ and $H_A(f)$ are isomorphisms; according to \cite[Theorem C]{Stelzig_2025}, this is sufficient to ensure that the maps induced in all related cohomologies are isomorphisms. Two cbbas $A$ and $B$ are weakly equivalent if they are connected by a chain of quasi-isomorphisms
\[
	A\overset{f_1}{\longrightarrow}C_1\overset{f_2}{\longleftarrow}C_2\overset{f_3}{\longrightarrow}...\overset{f_{n-1}}{\longleftarrow}C_{n-1}\overset{f_n}{\longrightarrow}B.
\]

\begin{definition}\cite[Definition 2.9]{Stelzig_2025}
	Let $A$ be an augmented cbba.
	\begin{enumerate}
		\item $A$ is nilpotent if it is a composition of (bigraded) Hirsch extensions (see \cite[Definition 2.3]{Stelzig_2025});
		\item $A$ is minimal if it is nilpotent and $\text{Im}\ddbar\subseteq A^+\cdot A^+$;
		\item a cbba quasi-isomorphism $\varphi:\mathcal{M}\to A$, where $\mathcal{M}$ is a nilpotent cbba, is called a model for $A$. If $\mathcal{M}$ is minimal, $\varphi$ is called a minimal model.
	\end{enumerate}
\end{definition}

Equivalently, an augmented cbba $A$ is nilpotent if it admits a presentation as a free commutative bigraded algebra $A=\Lambda V$, where $V$ admits a well-ordered basis $\{v_i\}_{i\in I}$ such that $dv_j\in\Lambda\langle v_i|i<j\rangle$. A nilpotent augmented cbba $A$ is minimal if and only if the bicomplex of indecomposables $A^+/A^+\cdot A^+$ with induced differentials is minimal.

The existence of a minimal model is guaranteed for any manifold that is compact, connected and holomorphically simply connected (i.e., $H^1_A(X):=H^{1,0}_A(X)\oplus H^{0,1}_A(X)=0$).

\begin{theorem}\cite[Theorem 2.34]{Stelzig_2025}\label{thmexistence}
	For any connected, compact complex manifold that is holomorphically simply connected, there exists a simply connected, real, degree-wise finite dimensional bigraded minimal model for the cbba of forms, which is unique up to isomorphism.
\end{theorem}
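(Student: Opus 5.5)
This result is quoted from \cite[Theorem 2.34]{Stelzig_2025}, so in the paper it is simply invoked. If I were to reprove it, I would follow the classical Sullivan construction, adapted to bigraded Hirsch extensions and to the two cohomologies $H_{BC}$ and $H_A$. The input is the cbba $A_X$ of complex forms. It is connected because $X$ is connected, so $H^0=\C$, and it carries the real structure given by complex conjugation. Holomorphic simple connectivity, $H^1_A(X)=0$, is what lets us build a model with no generators in total degree $1$. The invariants controlling the construction are the Bott--Chern and Aeppli cohomologies, which are finite dimensional on a compact manifold by ellipticity.

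\textbf{Existence.} I would build $\mathcal{M}=\Lambda V$ inductively by total degree, starting with $\mathcal{M}_1=\C$. At stage $k$ I assume a morphism $\varphi_k:\mathcal{M}_k\to A_X$ has been constructed which induces isomorphisms on $H_{BC}$ and $H_A$ in total degrees $<k$ and suitable injectivity in degree $k$. I then add finitely many bigraded generators in total degree $k$ in three groups:
\begin{enumerate}[label=(\alph*)]
\item closed generators, to hit the cokernel of $H_{BC}^k(\varphi_k)$;
\item pairs $x,\partial x,\bar\partial x$ patterned on the indecomposable ``zigzag'' bicomplexes, to hit the cokernel in $H_A$;
\item generators whose differentials kill the kernel in degree $k+1$. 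Their $\partial$ and $\bar\partial$ images are prescribed compatibly with the bigraded Hirsch extension formalism \cite[Definition 2.3]{Stelzig_2025}.
\end{enumerate}
Each step is a bigraded Hirsch extension, so the result is nilpotent. Finite dimensionality of $H_{BC}$ and $H_A$ keeps each $V^{p,q}$ finite dimensional. Since $H^1_A=0$, and hence $H^1_{BC}=0$ for a connected space, no degree-$1$ generators are needed, which gives simple connectivity. Minimality, $\text{Im}\,\ddbar\subseteq A^+\cdot A^+$, is arranged by never adding a generator whose $\ddbar$-image is itself a generator: any such contractible square summand of the indecomposables can be split off. Performing every choice with conjugation-stable data makes the model real.

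\textbf{Main obstacle.} The delicate point is the degree-$k$ step. One must show that the obstructions to extending $\varphi_k$ are precisely elements of $H_{BC}$ and $H_A$, of the relative cone in the appropriate degree. Then adding generators of the correct shape kills them without creating new classes in lower degrees. Unlike the single-differential case, one has to control how $\partial$ and $\bar\partial$ interact; this is exactly where \cite[Theorem C]{Stelzig_2025} enters, ensuring that $H_{BC}$ and $H_A$ isomorphisms suffice for all cohomologies.

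\textbf{Uniqueness.} I would prove a lifting lemma: a morphism from a minimal cbba lifts, up to homotopy, along quasi-isomorphisms. This gives quasi-isomorphisms in both directions between two minimal models. A quasi-isomorphism between minimal simply connected cbbas induces an isomorphism on the indecomposable bicomplexes, because these are minimal bicomplexes determined by cohomology. An induction over the degree filtration then shows it is an isomorphism.
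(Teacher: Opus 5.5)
\textbf{Verdict: there is a genuine gap.} The paper gives no proof of this statement. It is imported from \cite[Theorem 2.34]{Stelzig_2025}, and the paper only records how that algorithm behaves, in the proofs of Theorems \ref{centralcohomology} and \ref{hypThmAlg}. Measured against that, your existence sketch fails in step (c).

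\textbf{Step (c) adds generators in the wrong degree.} The kernel to be killed is that of $H^{k+1}_{BC}(\varphi_k)$, and a class $[c]_{BC}$ dies only when $c\in\text{Im}\,\ddbar$.
\begin{itemize}
\item Since $\ddbar$ raises total degree by $2$, a primitive of $c$ has total degree $k-1$.
\item A Hirsch extension by generators of total degree $\ge k$ leaves the degree-$(k-1)$ part of the algebra unchanged, and hence leaves $\text{Im}\,\ddbar$ in degree $k+1$ unchanged.
\item However you prescribe $\partial$ and $\bar\partial$ of degree-$k$ generators, they can at most make $c$ $\partial$- or $\bar\partial$-exact. Without a $\ddbar$-Lemma this does not kill $[c]_{BC}$.
\end{itemize}
So your procedure never achieves injectivity on $H_{BC}$.

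\textbf{What the algorithm actually does.} This is visible in the proof of Theorem \ref{hypThmAlg}. Take decomposable representatives $c$ of a basis of the kernel. For each, add a generator $r$ of total degree $k-1$ with $\ddbar r=c$, together with new generators $\partial r,\bar\partial r$ of degree $k$. These are the L-shaped zigzags $R=K[-1,-1]$, $\partial R$, $\bar\partial R$.
\begin{itemize}
\item The induction is therefore asymmetric in the two cohomologies. Stage $k$ adds generators in degrees $k-1$ and $k$.
\item The invariant is injectivity on $H^{\le k+1}_{BC}$ together with surjectivity on $H^{\le k-1}_A$, as described in the proof of Theorem \ref{centralcohomology}. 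It is not ``isomorphism below $k$, injectivity in degree $k$'' for both cohomologies.
\item You must check that the new $r$'s create no classes in degree $k-1$. This holds because no nonzero combination of them is $\ddbar$-closed, the $[c]$'s being independent.
\item You must also check that the kernel admits decomposable representatives. This is what minimality really requires, rather than ``never adding a generator whose $\ddbar$-image is a generator.''
\end{itemize}

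\textbf{Consequence for simple connectivity.} This does not follow from $H^1_A=H^1_{BC}=0$ alone. In the corrected scheme, degree-$1$ generators would arise at the degree-$2$ stage as $\ddbar$-primitives for $\ker H^3_{BC}$. At that stage every degree-$3$ element is indecomposable, so such a primitive would also destroy minimality. You therefore have to show this kernel vanishes by a suitable choice of representatives. For example, if $\partial a=\ddbar b$ for an Aeppli representative $a$, replace $a$ by $a-\bar\partial b$; then $\partial(a-\bar\partial b)=0$ and one may set $\partial x=0$.

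\textbf{Uniqueness.} The phrase ``minimal bicomplexes determined by cohomology'' skips the real step. The indecomposables are not determined by $H_{BC}$ and $H_A$ of the algebra. A non-strongly formal $\ddbar$-manifold and its cohomology algebra with zero differentials have the same cohomologies but non-isomorphic minimal models. What needs proof is that a quasi-isomorphism of simply connected minimal cbbas induces a quasi-isomorphism of the bicomplexes of indecomposables, for instance by induction over degrees.

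Granted that, minimality does finish the argument. On the indecomposables $\ddbar=0$, so
\[
H_{BC}=\ker\partial\cap\ker\bar{\partial},\qquad H_A=B/(\text{Im}\,\partial+\text{Im}\,\bar{\partial}).
\]
A map injective on the former is injective, since any nonzero $v$ in its kernel yields a nonzero $\partial$- and $\bar\partial$-closed element $v$, $\partial v$ or $\bar\partial v$ in the kernel. A map surjective on the latter is surjective, since $\partial B'=f(\partial B)$ and $\bar\partial B'=f(\bar\partial B)$.
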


\subsection{Strong formality} Following \cite[Definition 4.1]{milivojevic2024bigradednotionsformalityaepplibottchernmassey}, a cbba $A$ is said to be strongly formal if it is weakly equivalent to a cbba $(H,\partial_H,\bar{\partial}_H)$ with vanishing differentials, i.e. $\partial_H=\bar{\partial}_H=0$. Equivalently, a cbba is strongly formal if it is weakly equivalent to one whose underlying bicomplex decomposes solely into dots (see \cite{Stelzig_2021} for the decomposition of double complexes in dots, squares and zigzags). A complex manifold is strongly formal if its cbba of differential forms satisfies this property. 

Several classes of strongly formal manifolds have been identified: 
\begin{itemize}
	\item Hermitian symmetric spaces \cite[Proposition 4.19]{milivojevic2024bigradednotionsformalityaepplibottchernmassey};
	\item compact K\"ahler solvmanifolds \cite[Corollary 5.11]{sferruzza2024bottchernformalitymasseyproducts};
	\item splitting-type solvmanifolds of the form $\C^n\ltimes\C^m/\Gamma$ satisfying the $\ddbar$-Lemma \cite[Corollary 4.7]{RUBINI2025105586};
	\item complete smooth complex toric varieties and homogeneous compact K\"ahler manifolds \cite[Theorem 3.2.3 and Corollary 4.1.10]{placini2025strongformalitytorichomogeneous};
	\item compact geometrically Aeppli formal manifolds \cite[Proposition 1]{sferruzza2025hermitiangeometricallyformalmanifolds};
	\item compact K\"ahler manifolds of complex dimension $n\ge2$ with cohomology of complete intersection type \cite[Theorem 3.5]{Stelzig_2025}.
\end{itemize}

Conversely, in \cite{placini2024nontrivialmasseyproductscompact}, it is shown that any closed Riemann surface of genus $g\ge2$ is not strongly formal. Furthermore, the authors provide an example of a $3$-dimensional simply connected compact K\"ahler manifold that fails to be strongly formal. 

\section{$s$-strong formality of cbbas}\label{section2}

The goal of this section is to introduce the notion of $s$-strong formality and to establish several related results in a purely algebraic setting. 

We begin by observing that if a cbba $A$ is weakly equivalent to a cbba $B$ and both admit minimal models, then there exists a minimal cbba $\mathcal{M}$ and cbba quasi-isomorphisms $\psi:\mathcal{M}\to A$ and $\varphi:\mathcal{M}\to B$. This is achieved by iteratively applying the lifting property to the minimal models of the cbbas of the chain connecting $A$ and $B$ (following the approach in the proof of \cite[Corollary 2.29]{Stelzig_2025}) until we get the required quasi-isomorphisms. Consequently, if $A$ is a strongly formal cbba with minimal model $\mathcal{M}$, we may assume the existence of a diagram
\[
	\begin{tikzcd}
		&\arrow[ld](\mathcal{M},\partial_{\mathcal{M}},\bar{\partial}_{\mathcal{M}})\arrow[rd]&\\
		(A,\partial_A,\bar{\partial}_A)&&(H,0,0)
	\end{tikzcd}
\]
where the arrows are cbba quasi-isomorphisms. In particular, we can assume the existence of a cbba quasi-isomorphism $\psi:\mathcal{M}\to H_{BC}(\mathcal{M})\simeq H_{BC}(A)$ between the minimal model and its cohomology. By composing this morphism with the inverse of the induced map $H_{BC}(\psi)$, we may further assume that it induces the identity in Bott-Chern cohomology.

We now recall the characterization of the classical notion of formality proved in \cite{DeligneGriffiths1975} that inspired the notion of $s$-formality introduced in \cite{fernandezmunoz}.

\begin{theorem}\cite[Theorem 4.1]{DeligneGriffiths1975}\label{Deligne}
	A minimal cdga $\mathcal{M}$ is formal if and only if there exists a graded vector space of generators $V$, $\mathcal{M}=\Lambda V$, which decomposes as a direct sum $V=C\oplus N$ with $d(C)=0$, $d$ injective on $N$ and such that every closed element in the ideal $I(N)$ generated by $N$ is exact.
\end{theorem}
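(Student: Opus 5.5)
The statement just above is the Deligne–Griffiths–Morgan–Sullivan characterization (Theorem \ref{Deligne}), quoted from \cite{DeligneGriffiths1975}. My plan is to reprove it by the classical argument, since the bigraded version (Theorem \ref{charintro}) will follow the same scheme.

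\emph{Formality implies the splitting.} Let $\mathcal{M}=\Lambda V$ be minimal and formal. As in the discussion above for cbbas, a lifting argument for minimal models gives a quasi-isomorphism $\psi:\mathcal{M}\to H(\mathcal{M})$ inducing the identity in cohomology. I would set $C:=V\cap\ker d$ and construct a complement $N$ inductively on degree so that $N\subseteq\ker\psi$.
\begin{itemize}
\item Given any complement $N'$ of $C$ in $V^k$, correct each $n\in N'$ by an element of $C$. Since $\psi|_{C}$ maps onto a complement of the decomposables in cohomology (up to lower terms), one can arrange $\psi(n)=0$. More precisely, $\psi(n)$ lies in $H^k$; write it as $\psi(c)$ plus decomposables $\psi(\text{products of lower generators})$, and subtract.
\item Injectivity of $d$ on $N$ is automatic, because $N\cap\ker d=0$ by construction.
\item Any element of the ideal $I(N)$ maps to $0$ under $\psi$, since $\psi$ is multiplicative and kills $N$. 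If $x\in I(N)$ is closed, then $[x]=H(\psi)[x]=0$, so $x$ is exact.
\end{itemize}
The main subtlety is ensuring that the corrections keep $\psi(N)=0$ while staying in $V$ rather than in decomposables. I expect to handle this by allowing $V$ to be re-chosen: a generator may be replaced by itself plus decomposables, which still gives a generating space.

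\emph{The splitting implies formality.} I would define $\psi:\Lambda V\to H(\Lambda V)$ by $c\mapsto[c]$ and $n\mapsto0$, extended multiplicatively.
\begin{itemize}
\item $\psi$ is a chain map. The differential $dn$ is closed, and the goal is $[\psi(dn)]=0$, which I would establish by induction on degree. Note $\Lambda V=\Lambda C\oplus I(N)$, and $\psi$ restricted to $\Lambda C$ is $x\mapsto[x]$. Split $dn=a+b$ with $a\in\Lambda C$ and $b\in I(N)$.
\item Then $db=-da=0$, so $b$ is closed in $I(N)$ and hence exact by hypothesis. Therefore $[a]=[dn]-[b]=0$, which gives $\psi(dn)=[a]=0$.
\item Surjectivity on cohomology: any class is $[a+b]$ with $b$ exact, so it equals $[a]=\psi(a)$.
\item Injectivity on cohomology: if $\psi(a+b)=[a]=0$, then $a$ is exact, and $b$ is closed in $I(N)$, hence exact.
\end{itemize}
Thus $\psi$ is a quasi-isomorphism to a cdga with zero differential.
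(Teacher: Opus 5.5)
The paper only cites this result. Its converse half of the proof of Theorem \ref{characterization} is the bigraded version of the same argument, and I compare against that.

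Your second direction (splitting $\Rightarrow$ formality) is correct and is exactly the argument the paper uses there. It rests on $\Lambda V=\Lambda C\oplus I(N)$, $\psi(c)=[c]$ and $\psi(I(N))=0$. For $dx=a+b$, the component $b$ is a closed element of $I(N)$, hence exact, so $\psi(dx)=[a]=0$ and $H(\psi)=\mathrm{id}$.

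The gap is in the first direction, at the choice $C:=V\cap\ker d$ for the given space of generators. Your key claim is that every class in $H^k$ is $\psi(c)$, $c\in C^k$, plus $\psi$ of a product of lower generators. This is false in general, and then no correction of $n\in N'$ by elements of $C$ and decomposables lands in $\ker\psi$.

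\emph{Counterexample.} Take $\mathcal{M}=\Lambda(a,u,v)$ with $|a|=2$, $|u|=3$, $|v|=5$, $da=0$, $du=a^2$, $dv=a^3$.
\begin{itemize}
\item It is minimal and formal: since $d(au)=a^3$, the element $v-au$ is closed, and $\mathcal{M}$ is the model of $S^2\times S^5$.
\item Here $C^5=0$, and the only decomposable of degree $5$ is $au$, with $\psi(au)=\psi(a)\psi(u)=0$ because $H^3=0$.
\item But $\psi(v)=\psi(v-au)=[v-au]\neq0$, so $v$ cannot be corrected into $\ker\psi$.
\item Subtracting a closed representative $z=z_1+z_2$ of $\psi(n)$ does not help either. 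Its $V$-component $z_1$ need not be closed, and here $v-(v-au)=au$, so the generator disappears.
\end{itemize}

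\emph{The missing step} is a normalization of the generators before $C$ is chosen. Every $y\in V^k$ with $dy=db_y$ for some $b_y\in\Lambda V^{<k}$ must first be replaced by the closed generator $y-b_y$. This is what the paper does in the converse part of Theorem \ref{characterization}:
\begin{itemize}
\item $C^{p,q}$ is defined as $\ker\bigl(V^{p,q}\to\Lambda V/d(\Lambda V^{<i})\bigr)$, and each such $y$ is replaced by $\hat y=y-b_y$;
\item $N^{p,q}$ is defined intrinsically as the kernel of $V^{p,q}\to H^{p,q}/\psi(\Lambda V^{<i})$ and corrected by $x\mapsto x-a_x$.
\end{itemize}
After this normalization, every closed $z$ of degree $k$ can be written as $\hat c+w$, with $\hat c$ a closed generator and $w$ a closed decomposable. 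Hence $H^k=\psi(C^k)+\psi\bigl((\Lambda V^{<k})^k\bigr)$ does hold, and your correction $n\mapsto n-c-w$ goes through. The remaining steps of your first direction are then fine: $\psi(I(N))=0$ together with $H(\psi)=\mathrm{id}$ forces closed elements of $I(N)$ to be exact, and no nonzero element of $N$ can be closed, since it would then be exact and hence decomposable.
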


\begin{remark}\label{remark1}
	Note that if $V=C\oplus N$ with $d(C)=0$, the injectivity of $d$ on $N$ is equivalent to asking $C=\ker(d_{|_V})$. 
\end{remark}

We begin by proving an analogue of Theorem \ref{Deligne} for strongly formal minimal cbbas.

\begin{theorem}\label{characterization}
	A minimal cbba $(\mathcal{M}, \partial, \bar{\partial})$ is strongly formal if and only if it satisfies the $\ddbar$-Lemma and there exists a bigraded vector space of generators $V$, $\mathcal{M}=\Lambda V$, which decomposes as $V=C\oplus N$ as bigraded vector spaces such that:
	\begin{enumerate}[label=(\roman*)]
		\item for each $(p,q)$, $d(C^{p,q})=0$ (equivalently $\partial C^{p,q}=\bar{\partial}C^{p,q}=0$);
		\item for each $(p,q)$, $d$ is injective on $N^{p,q}$;
		\item \label{iiiCharac}every $\ddbar$-closed element in the ideal $I(N):=N\cdot(\Lambda V)$ belongs to $\partial\mathcal{M}+\bar{\partial}\mathcal{M}$;
		\item \label{ivCharac}every $\partial$ and $\bar{\partial}$-closed element in the ideal $I(N)$ belongs to $\ddbar\mathcal{M}$.
	\end{enumerate}
	\begin{remark}\label{remark}
		We point out that conditions \ref{iiiCharac} and \ref{ivCharac} are equivalent to asking that $I(N)$ does not contribute to the cohomology of $\mathcal{M}$, i.e., $H_*(i)=0$ for $*\in\{BC, A\}$, where $i:I(N)\hookrightarrow\mathcal{M}$ is the inclusion. Furthermore, under the assumption of the $\ddbar$-Lemma, condition \ref{iiiCharac} implies condition \ref{ivCharac}. Indeed, if $\eta\in I(N)$ is both $\partial$ and $\bar{\partial}$-closed, then it is also $\ddbar$-closed; by \ref{iiiCharac}, $\eta\in\text{Im}\partial+\text{Im}\bar{\partial}$ and, due to the $\ddbar$-Lemma, $\eta\in\ker\partial\cap\ker\bar{\partial}\cap(\text{Im}\partial+\text{Im}\bar{\partial})=\text{Im}\ddbar$, which is condition \ref{ivCharac}. We list both conditions for consistency with the upcoming definition of $s$-strong formality.
	\end{remark}
	\begin{proof}
		Suppose there exists a set of generators $V$ admitting a decomposition $V^{p,q}=C^{p,q}\oplus N^{p,q}$ as above. We want to construct a pluripotential quasi-isomorphism between the minimal cbba and its Bott-Chern cohomology. Define the projections $\pi^{p,q}:V^{p,q}\to C^{p,q}$ and a map $\psi:\Lambda V\to H_{BC}(\Lambda V)$ by setting $\psi(v)=[\pi^{p,q}(v)]_{BC}$ for $v\in V^{p,q}$, extending it to $\Lambda V$ by linearity and multiplicativity. By construction, $\psi$ is an algebra morphism. Note that $\Lambda V=\Lambda C\oplus I(N)$ and, for any $\alpha=c+n$ with $c\in \Lambda C$ and $n\in I(N)$, we have $\psi(\alpha)=[c]_{BC}$.
		
		We now verify that $\psi$ commutes with the differential, which is equivalent to showing that $\psi\circ d=0$. To this end, consider $d\alpha\in\Lambda V$ and write $d\alpha=\gamma+\eta$ with $\gamma\in\Lambda C$ and $\eta\in I(N)$. Since $\eta=d\alpha-\gamma$ is $\ddbar$-closed in $I(N)$, condition \ref{iiiCharac} implies that $\eta\in\text{Im}\partial+\text{Im}\bar{\partial}$. By the $\ddbar$-Lemma, it follows that
		\[
			\gamma=d\alpha-\eta\in(\text{Im}\partial+\text{Im}\bar{\partial})\cap\ker\partial\cap\ker\bar{\partial}=\text{Im}\ddbar.
		\]
		Thus, $\psi(d\alpha)=[\gamma]_{BC}=0$, confirming that $\psi$ is a cbba morphism.
		
		By construction, $\psi$ is a quasi-isomorphism. Indeed, it follows that $\psi$ induces the identity in both Bott-Chern and Aeppli cohomologies. To see this, consider an element $x=c+n$, with $c\in \Lambda C$ and $n\in I(N)$, which is respectively $\partial$ and $\bar{\partial}$-closed, or $\ddbar$-closed. In either case, $n$ is closed in the same sense; consequently, $n$ is either $\ddbar$-exact or, respectively, $n$ belongs to $\text{Im}\partial+\text{Im}\bar{\partial}$. It follows that $[x]=[c]$ and 
		\[
			H_{*}(\psi)[x]=[\psi(x)]=[c]=[x]
		\]
		for Bott-Chern cohomology or, respectively, for Aeppli cohomology.
		
		Conversely, assume that $(\Lambda V,\partial,\bar{\partial})$ is strongly formal. The $\ddbar$-Lemma is implicit by this condition. Our goal is to modify the spaces of generators $V^{p,q}$ into spaces $\hat{V}^{p,q}$ satisfying the required properties while ensuring that $\Lambda V^{\le i}=\Lambda \hat{V}^{\le i}$ for all $i\ge0$. Let $\psi:(\Lambda V,\partial,\bar{\partial})\to (H_{BC}(\Lambda V),\partial\equiv 0,\bar{\partial}\equiv0)$ be the cbba quasi-isomorphism realizing the strong formality. For each $p+q=i$, we define
		\[
			N^{p,q}=\ker\left(V^{p,q}\xrightarrow{\psi} H^{p,q}_{BC}(\Lambda V)\to\dfrac{H^{p,q}_{BC}(\Lambda V)}{\psi(\Lambda V^{<i})}\right).
		\]
		For any generator $x\in N^{p,q}$, we choose $a_x\in(\Lambda V^{<i})^{p,q}$ such that $\psi(x)=\psi(a_x)$. Such an element $a_x$ can be taken to be closed, as $\psi$ maps decomposables to decomposables in $H_{BC}(\Lambda V)$. We set $\hat{x}=x-a_x$, yielding a space $\hat{N}^{p,q}$ isomorphic to $N^{p,q}$ via the assignment $x\mapsto x-a_x$. Indeed, if $x-a_x=y-a_y$, since for $p+q=i$ we have $N^{p,q}\cap\Lambda V^{<i}=\{0\}$, it follows that $a_x=a_y$ and $x=y$. To construct $\hat{C}^{p,q}$, we define 
		\[
			C^{p,q}=\ker\left(V^{p,q}\xrightarrow{d}\Lambda V\to\dfrac{\Lambda V}{d(\Lambda V^{<i})}\right).
		\]
		For $y\in C^{p,q}$, there exists $b_y\in\Lambda V^{<i}$ such that $dy=db_y$. Since $b_y$ is defined up to closed elements, we may assume $\psi(b_y)=0$. We claim that $b_y$ can be chosen of pure bidegree $(p,q)$. Indeed, suppose
		\[
			b_y=\sum_{j=-q}^{p}b^{p-j,q+j},\quad \text{where }|b^{p-j,q+j}|=(p-j,q-j).
		\]
		Since $|y|=(p,q)$, degree considerations imply the following system:
		\begin{equation}
			\begin{cases}\label{cases}
				\partial y=\partial b^{p,q}+\bar{\partial}b^{p+1,q-1}\\
				\bar{\partial}y=\bar{\partial}b^{p,q}+\partial b^{p-1,q+1}\\
				\partial b^{p-j,q+j}+\bar{\partial} b^{p-j+1,q+j-1}=0\quad\forall j=-q,...,-1\\
				\bar{\partial}b^{p-j,q+j}+\partial b^{p-j-1,q+j+1}=0\quad\forall j=1,...,p\\
		\end{cases}
		\end{equation}
		Consider the pure bidegree element $y-b^{p,q}$. From the first two rows of \eqref{cases}, it follows that
		\[
			\partial(y-b^{p,q})=\bar{\partial}b^{p+1,q-1};\quad\bar{\partial}(y-b^{p,q})=\partial b^{p-1,q+1}.
		\]
		By the $\ddbar$-Lemma, $\partial(y-b^{p,q})$ and $\bar{\partial}(y-b^{p,q})$ are $\ddbar$-exact; thus, there exist $\phi^{p,q-1}$ and $\phi^{p-1,q}$ of pure bidegrees $(p,q-1)$ and $(p-1,q)$, respectively, such that $\partial(y-b^{p,q})=\ddbar\phi^{p,q-1}$ and $\bar{\partial}(y-b^{p,q})=\ddbar\phi^{p-1,q}$. Consequently, the element $y-b^{p,q}-\bar{\partial}\phi^{p,q-1}+\partial\phi^{p-1,q}$ is both $\partial$ and $\bar{\partial}$-closed. 
		
		To prove the claim, we show that $\bar{\partial}\phi^{p,q-1}-\partial\phi^{p-1,q}$ is decomposable. Suppose $\bar{\partial}\phi^{p,q-1}-\partial\phi^{p-1,q}=\alpha+\beta$ with $\alpha\in V^{p,q}$ and $\beta\in\Lambda V^{<i}$. Since $\psi$ is a cbba morphism, $\psi(\alpha+\beta)=\psi(\bar{\partial}\phi^{p,q-1}-\partial\phi^{p-1,q})=0$. Then $\psi(\alpha)=\psi(-\beta)$, which implies $\alpha\in N^{p,q}$. It follows that $y-\alpha\in C^{p,q}$, yielding $\alpha\in C^{p,q}\cap N^{p,q}$. As established below, this intersection is trivial; hence $\alpha=0$, and $\bar{\partial}\phi^{p,q-1}-\partial\phi^{p-1,q}$ is indeed decomposable. 
		
		Let $\tilde{b}:=b^{p,q}+\bar{\partial}\phi^{p,q-1}-\partial\phi^{p-1,q}$ be the decomposable element of pure bidegree $(p,q)$. We have $\bar{\partial}\tilde{b}=\bar{\partial}b^{p,q}+\ddbar\phi^{p-1,q}=\bar{\partial}y$ and $\partial\tilde{b}=\partial b^{p,q}+\ddbar\phi^{p,q-1}=\partial y$. Choosing $b_y$ to be $\tilde{b}$ and setting $\hat{y}=y-b_y$, we obtain a space $\hat{C}^{p,q}$ isomorphic to $C^{p,q}$. By construction, $\psi(\hat{N}^{p,q})=0$ and $d(\hat{C}^{p,q})=0$.
		
		We observe that $\hat{N}^{p,q}\cap\hat{C}^{p,q}=\{0\}$: if $x\in\hat{N}^{p,q}\cap\hat{C}^{p,q}$, then 
		\[
			\psi^*[x]_{BC}=\psi(x)=0\quad\iff\quad[x]_{BC}=0.
		\]
		Thus, $x$ would be a generator in $\text{Im}\ddbar$, which is a contradiction unless $x=0$. 
		
		Furthermore, we have $V^{p,q}=C^{p,q}\oplus N^{p,q}$: if $x\in C^{p,q}\cap N^{p,q}$, then $\psi(x-a_x)=0$ and $d(x-b_x)=0$. As previously noted, we can choose $a_x$ and $b_x$ such that $d(a_x)=0$ and $\psi(b_x)=0$. Thus, $\psi(x-a_x-b_x)=0$ and $d(x-a_x-b_x)=0$, yielding $[x-a_x-b_x]_{BC}=0$. Since $x$ is a generator, $x-a_x-b_x$ cannot be $\ddbar$-exact, forcing $x=0$. Finally, to show that $V^{p,q}=C^{p,q}+N^{p,q}$, let $x\in V^{p,q}$. If $\psi(x)=[t]_{BC}$, we have $\psi(x-t)=0$. Write $t=t_1+t_2$, with $t_1\in V^{p,q}$ and $t_2\in(\Lambda V^{<p+q})^{p,q}$. Then $t_1\in C^{p,q}$ because $dt_1=d(-t_2)\in d(\Lambda V^{<p+q})$. Since $\psi(x-t_1)=\psi(t_2)$, we have $x-t_1\in N^{p,q}$, concluding that $x=x-t_1+t_1\in C^{p,q}+N^{p,q}$.
		
		The injectivity of $d$ on $\hat{N}^{p,q}$ follows from Remark \ref{remark1}. To establish the other properties, consider a closed element $\alpha\in I(\bigoplus_{p,q}\hat{N}^{p,q})$ of pure bidegree. Such an element defines a Bott-Chern cohomology class $[\alpha]_{BC}$. Since $\psi$ vanishes on $\hat{N}$ by construction, we have $\psi^*([\alpha]_{BC})=[\psi(\alpha)]_{BC}=0$. Given that $\psi$ induces the identity in Bott-Chern cohomology, it follows that $[\alpha]_{BC}=0$, which implies that $\alpha$ is $\ddbar$-exact. 
		
		Lastly, Let $\alpha\in I(\bigoplus_{p,q}\hat{N}^{p,q})$ be a $\ddbar$-closed element of pure bidegree; this element defines an Aeppli cohomology class $[\alpha]_A$. In an analogous way, $H_A(\psi)[\alpha]_A=[\psi(\alpha)]_A=0$. Since $\psi$ induces an isomorphism in Aeppli cohomology, we conclude that $[\alpha]_A=0$, thereby ensuring that $\alpha\in\text{Im}\partial+\text{Im}\bar{\partial}$.
	\end{proof}
\end{theorem}

We adopt the following notation: if $A$ is a subalgebra of a cbba $B$, we denote by $\mathcal{C}(A)$ the smallest sub-cbba of $B$ containing $A$. Equivalently, $\mathcal{C}(A)$ is the subalgebra of $B$ generated by $A$, $\partial A$, $\bar{\partial}A$ and $\ddbar A$.

To verify the validity of the $\ddbar$-Lemma relatively to the degree of the generators, we introduce the following notion.

\begin{definition}\label{ddbaruptos}
	 Let $(\mathcal{M}=\Lambda V, \partial,\bar{\partial})$ be a minimal cbba and let $s\in\mathbb{N}$. $\mathcal{M}$ is said to satisfy the $\ddbar$-Lemma {\em"up to degree $s$"} if every $d$-exact, $\partial$ and $\bar{\partial}$-closed element in $\mathcal{C}(\Lambda(V^{\le s}))$ is $\ddbar$-exact in $\Lambda V$
\end{definition}

In light of Theorem \ref{characterization}, we define $s$-strong formality, which serves as the complex analogue of $s$-formality introduced in \cite[Definition 2.2]{fernandezmunoz}. 

\begin{definition}\label{sstrong}
	Let $(\mathcal{M}, \partial,\bar{\partial})$ be a minimal cbba and let $s\in\mathbb{N}$. $\mathcal{M}$ is {\em$s$-strongly formal} if there exists a set of generators $V$, $\mathcal{M}=\Lambda V$, such that, for each $p+q\le s$, we have a splitting $V^{p,q}=C^{p,q}\oplus N^{p,q}$ satisfying:
	\begin{enumerate}[label=(\roman*)]
		\item $d(C^{p,q})=0;$
		\item $d$ is injective on $N^{p,q}$;
		\item \label{iii} every $\partial$ and $\bar{\partial}$-closed element in the ideal $I_s:=I(N^{\le s}+\partial N^{\le s}+\bar{\partial}N^{\le s}+\ddbar N^{\le s})\subseteq\mathcal{C}(\Lambda(V^{\le s}))$ is $\ddbar$-exact in $\Lambda V$;
		\item \label{(iv)} every $\ddbar$-closed element in the ideal $I_s$ belongs to $\text{Im}\partial+\text{Im}\bar{\partial}\subseteq\Lambda V$;
		\item $\mathcal{M}$ satisfies the $\ddbar$-Lemma "up to degree $s$".
	\end{enumerate} 
A complex manifold $M$ is $s$-strongly formal if its cbba of differential forms admits a bigraded minimal model that is $s$-strongly formal.
\end{definition}

Note that $\mathcal{C}(\Lambda V^{\le s})\subseteq \Lambda V^{\le s+1}$ due to the minimality of $\Lambda V$. As observed in Remark \ref{remark}, conditions \ref{iii} and \ref{(iv)} imply that elements in the ideal $I_s$ do not contribute to the cohomology of the entire cbba. 

As a first result on $s$-strong formality, we prove the following analogue of \cite[Lemma 2.7]{fernandezmunoz}, proceeding similarly to the proof of Theorem \ref{characterization}.

\begin{lemma}\label{lemma}
	Let $(\Lambda V,\partial,\bar{\partial})$ be a minimal cbba. Then $\Lambda V$ is $s$-strongly formal if and only if it satisfies the $\ddbar$-Lemma "up to degree $s$" and there exists a cbba morphism
	\begin{equation}\label{equation2}
		\psi:(\mathcal{C}(\Lambda V^{\le s}),\partial,\bar{\partial})\to(H_{BC}(\Lambda V),\partial=0,\bar{\partial}=0)
	\end{equation}
	such that the maps induced by $\psi$ in Bott-Chern and Aeppli cohomologies coincide with the maps induced by the inclusion 
	\[
		i:(\mathcal{C}(\Lambda V^{\le s}),\partial,\bar{\partial})\hookrightarrow(\Lambda V,\partial,\bar{\partial}).
	\]
	\begin{proof}
		Assume $\Lambda V$ is $s$-strongly formal. For $p+q\le s$, we define the map $\psi$ on the generators $V^{p,q}=C^{p,q}\oplus N^{p,q}$ as $\psi(v)=[\pi^{p,q}(v)]_{BC}$, where $\pi^{p,q}:V^{p,q}\to C^{p,q}$ is the projection. If $\eta\in\partial N^s$ or $\eta\in\bar{\partial}N^s$ is indecomposable, we set $\psi(\eta)=0$. By extending $\psi$ to the whole cbba $\mathcal{C}(\Lambda V^{\le s})$ by multiplicativity and linearity, we thus obtain a well-defined morphism of algebras. This is sufficient to have $\psi(\partial N^s)=\psi(\bar{\partial}N^s)=0$ even in the decomposable case: suppose $\partial\eta\in\partial N^s$ is of pure bidegree and decomposable. We can write $\partial\eta=\gamma+\beta$, where $\gamma\in\Lambda C^{\le s}$, $\beta\in I_s$ are decomposable. This implies that $\gamma=\partial\eta-\beta$ is $\partial$ and $\bar{\partial}$-closed and lies in $I_s$, hence it is $\ddbar$-exact by property \ref{iii}. Thus, by linearity and multiplicativity, $\psi(\partial\eta)=[\gamma]_{BC}=0$. The proof then proceeds as in Theorem \ref{characterization} to ensure commutativity with the differentials and to verify that the induced maps in Bott-Chern and Aeppli cohomologies coincide with the ones induced by the inclusion.
		
		Conversely, suppose there exists a morphism $\psi$ as in \eqref{equation2}. Following the arguments in the proof of Theorem \ref{characterization}, one can use $\psi$ instead of the quasi-isomorphism realizing the strong formality to construct the spaces $C^{p,q}$ and $N^{p,q}$ for $p+q\le s$, and, consequently, the spaces $\hat{C}^{p,q}$ and $\hat{N}^{p,q}$ verifying $V^{p,q}=C^{p,q}\oplus N^{p,q}$ and $\mathcal{C}(\Lambda V^{\le s})=\mathcal{C}(\Lambda \hat{V}^{\le s})$.
		
		The injectivity of $d$ on $\hat{N}^{p,q}$ follows from Remark \ref{remark1}. To establish property \ref{iii}, consider a $\partial$ and $\bar{\partial}$-closed element $\alpha\in I_s$ of pure bidegree. Since $\psi$ vanishes on $\hat{N}$ and commutes with the differentials, we have $H_{BC}(\psi)[\alpha]_{BC}=[\psi(\alpha)]_{BC}=0$. Given that $\psi$ induces the same maps as the inclusion $i$ in cohomology, this means exactly $\alpha\in\ddbar(\Lambda V)$. In an analogous way, condition \ref{(iv)} is verified.
	\end{proof}
\end{lemma}

This Lemma provides an alternative definition of $s$-strong formality that is more closely aligned with the definition of strong formality in \cite[Definition 4.1]{milivojevic2024bigradednotionsformalityaepplibottchernmassey}. Note that the morphism $\psi$ defined in Lemma \ref{lemma} induces isomorphisms in Bott-Chern and Aeppli cohomologies in degrees $\le s$ and injections in degree $s+1$.

We aim to prove a result analogous to \cite[Theorem 3.1]{fernandezmunoz}, which establishes the Sullivan formality of orientable, compact $(n-1)$-formal manifolds of dimension $2n$ or $2n-1$. To this end, we introduce the notion of Serre-dual cbba.

\begin{definition}\label{SD}
	An augmented cbba $(B,\partial,\bar{\partial})$ is said to be {\em$n$-Serre-dual} ($n$-SD) if it is connected and its cohomologies satisfy the following properties:
	\begin{enumerate}[label=(\roman*)]
		\item \label{iSD} for $*\in\{BC, A\}$, $H_*(B)$ is degree-wise finite dimensional;
		\item \label{iiSD} for $*\in\{BC, A\}$, $H_*^{p,q}(B)=0$ if $p<0$, $q<0$, $p>n$ or $q>n$; $H_{A}^{n,n}(B)\simeq\C$;
		\item \label{iiiSD} for each $0\le p,q\le n$, the multiplication induces a perfect pairing between Bott-Chern and Aeppli cohomologies:
		\[ 
			H_{BC}^{p,q}(B)\times H^{n-p,n-q}_A(B)\to H^{n,n}_A(B).
		\]
	\end{enumerate}
\end{definition}

The following Proposition provides a first relation between the degree-dependent notions introduced in this section and the global properties of an $n$-SD cbba.

\begin{proposition}\label{ddbarlemma}
	Let $\Lambda V$ be a minimal $n$-SD cbba. If $\Lambda V$ satisfies the $\ddbar$-Lemma "up to degree $(n-1)$", then it satisfies the global $\ddbar$-Lemma.
	\begin{proof}
		We show that condition $(a)_k$ of \cite[Lemma 5.15]{DeligneGriffiths1975} is verified for each $k\le n$, namely that $\ker\partial\cap\ker\bar{\partial}\cap\text{Im}d=\text{Im}\ddbar$ holds on $(\Lambda V)^k$. 
		
		Let $d\eta\in(\Lambda V)^k$ be a $d$-exact, $\partial$ and $\bar{\partial}$-closed element. For $k\le n-1$, it is $\ddbar$-exact by the $\ddbar$-Lemma "up to degree $n-1$". If $k=n$, then $\eta\in (\Lambda V)^{n-1}=(\Lambda V^{\le n-1})^{n-1}$, which implies $d\eta\in\mathcal{C}(\Lambda V^{\le n-1})$. Thus, we can still apply the $\ddbar$-Lemma "up to degree $n-1$" to conclude that $d\eta\in\text{Im}\ddbar$. This proves condition $(a)_k$ for $k\le n$. 
		
		By the equivalent conditions in \cite[Lemma 5.15]{DeligneGriffiths1975}, the natural map $H_{BC}^{p,q}(\Lambda V)\to H_{A}^{p,q}(\Lambda V)$ induced by the identity is an isomorphism for $p+q\le n-1$. For $p+q=n$, condition \ref{iiiSD} of Definition \ref{SD} yields the isomorphism
		\[
			H_{BC}^{p,q}(\Lambda V)\simeq H_{A}^{n-p,n-q}(\Lambda V)=H_{A}^{q,p}(\Lambda V).
		\]
		Thus, we obtain $\sum_{p+q=k}h_{BC}^k(\Lambda V)=\sum_{p+q=k}h_{A}^k(\Lambda V)$ for all $k\le n$. By the duality defined by the non-degeneracy of the pairing, this identity is verified for all degrees $k$ with $k\le2n$. By properties \ref{iSD} and \ref{iiSD}, we may apply \cite[Theorem 3.1]{AngTar16} to the $n$-SD minimal cbba $\Lambda V$, which implies the $\ddbar$-Lemma "up to degree $2n$". 
		
		Finally, this condition coincides with the global $\ddbar$-Lemma on an $n$-SD minimal cbba. Indeed, let $d\eta\in\ker\partial\cap\ker\bar{\partial}$. If $|d\eta|\le2n$, the $\ddbar$-Lemma "up to degree $2n$" implies $d\eta\in\text{Im}\ddbar$. If $|d\eta|>2n$, then $d\eta$ defines a cohomology class in $H^{|d\eta|}_{BC}(\Lambda V)$. Since $\Lambda V$ is an $n$-SD cbba, its Bott-Chern cohomology vanishes in degrees above $2n$; hence, we conclude $d\eta\in\text{Im}\ddbar$.
	\end{proof}
\end{proposition}

We now provide an analogue of \cite[Lemma 2.10]{fernandezmunoz} within the framework of bigraded minimal models and strong formality.

\begin{proposition}\label{prop}
	Let $\Lambda V$ be a minimal $n$-SD cbba. Then $\Lambda V$ is strongly formal if and only if it is $2n$-strongly formal.
	\begin{proof}
		The right implication follows from the characterization provided in Theorem \ref{characterization}. Indeed, strong formality implies $s$-strong formality for any $s\le 2n$: the global $\ddbar$-Lemma trivially implies the $\ddbar$-Lemma "up to degree $s$". To verify property \ref{iii} of Definition \ref{sstrong} for $s\le2n$, let $\eta\in I_s\cap\ker\partial\cap\ker\bar{\partial}$, 
		\[
			\eta=\eta_0\cdot v_0+\partial\eta_1\cdot v_1+\bar{\partial}\eta_2\cdot v_2+\ddbar\eta_3\cdot v_3,\quad\eta_i\in N^{\le s}, v_i\in\mathcal{C}(\Lambda( V^{\le s})).
		\] 
		As in the proof of Theorem \ref{characterization}, we may choose a cbba morphism $\psi$ realizing the strong formality such that $\psi(N)=0$ and which induces the identity in Bott-Chern cohomology. It follows that $\psi(\eta)=[\eta]_{BC}$ and, since $\psi$ is a cbba morphism, we have  
		\[
			\psi(\eta)=\psi(\eta_0)\cdot \psi(v_0)+\partial\psi(\eta_1)\cdot \psi(v_1)+\bar{\partial}\psi(\eta_2)\cdot\psi( v_2)+\ddbar\psi(\eta_3)\cdot \psi(v_3)=0,
		\]
		hence $\eta\in\text{Im}\ddbar$. To verify condition \ref{ivCharac}, let $\eta\in I_s\cap\ker\ddbar$. By the $\ddbar$-Lemma, we can write $\eta=\partial\alpha+\bar{\partial}\beta+\xi$, with $\xi\in\ker\partial\cap\ker\bar{\partial}$. Thus, since $\psi(I_s)=0$ as shown above and $\psi$ commutes with differentials, we obtain
		\[
			\psi(\xi)=\psi(\eta-\partial\alpha-\bar{\partial}\beta)=0.
		\]
		Therefore, since $\psi$ on a $\partial$ and $\bar{\partial}$-closed element coincides with its Bott-Chern cohomology class and  $[\xi]_{BC}=[\eta-\partial\alpha-\bar{\partial}\beta]_{BC}$, we conclude $\xi\in\text{Im}\ddbar$. This yields $\eta\in\text{Im}\partial+\text{Im}\bar{\partial}$, thus proving condition \ref{ivCharac}.
		
		Conversely, suppose that $\Lambda V$ is $2n$-strongly formal. We set $N^{p,q}=V^{p,q}$ and $C^{p,q}=0$ for all $p+q>2n$ and we denote by $N=\bigoplus_{p+q\ge0} N^{p,q}$. To prove strong formality, it suffices to verify the following conditions:
		\begin{enumerate}
			\item\label{(1)} if $x\in I(N)$ is closed, then $x$ is $\ddbar$-exact;
			\item\label{(2)} if $x\in I(N)$ is $\ddbar$-closed, then $x\in\text{Im}\partial+\text{Im}\bar{\partial}$;
			\item\label{(3)} the $\ddbar$-Lemma holds.
		\end{enumerate} 
		
		To prove (\ref{(1)}), let $x\in I(N)\cap\ker\partial\cap\ker\bar{\partial}$. If $\deg(x)\le2n$, then $x\in I_{2n}$ by degree reasons, thus the claim follows from property \ref{(iv)} of Definition \ref{sstrong} for $s=2n$. If $\deg(x)=i>2n$, then $x$ defines a cohomology class in $H^i_{BC}(\Lambda V)$. Since $\Lambda V$ is an $n$-SD cbba, its Bott-Chern cohomology vanishes in degrees $i>2n$, hence the element $x$ is $\ddbar$-exact. Condition (\ref{(2)}) is verified analogously.
		
		Finally, the $\ddbar$-Lemma "up to degree $2n$" implies the validity of the $\ddbar$-Lemma "up to degree $(n-1)$", hence due to Proposition \ref{ddbarlemma} condition (\ref{(3)}) follows.
	\end{proof}
\end{proposition}

We now establish an analogue to \cite[Theorem 3.1]{fernandezmunoz} in the bigraded setting. In this direction, we first state some preliminary technical Lemmas. We observe that if $\Lambda V$ is a minimal, connected and simply connected cbba, the minimality condition allows us to order the generators of $V^k$ as $x_1,...,x_{n_k}$ such that $dx_i\in\Lambda (V^{\le k-1}\oplus\langle x_1,..,x_{i-1}\rangle)\oplus V^{k+1}$. Furthermore, considering the composition 
\begin{equation}\label{rho}
	\rho: V^k\overset{d}{\longrightarrow}\mathcal{C}(\Lambda V^{\le k})\overset{\pi}{\longrightarrow}\frac{\mathcal{C}(\Lambda V^{\le k})}{d(\mathcal{C}(\Lambda V^{\le k-1}))},
\end{equation}
we may choose $x_1,...,x_p$ to be a basis for $\ker\rho$, as the minimality condition is trivially satisfied on this subspace.

\begin{lemma}\label{technical1}
	Let $\mathcal{M}=\Lambda V$ be a simply connected minimal $n$-SD cbba satisfying the $\ddbar$-Lemma. Let the generators $V^k=\langle x_1,...,x_{n_k}\rangle$ of pure bidegree be ordered as above, with $x_1,...,x_p$ spanning $\ker\rho$. For $i> p$, let $V_{i-1}:=V^{\le k-1}\oplus\langle x_1,...,x_{i-1}\rangle$ and assume that $V_{i-1}$ admits a splitting $C_{i-1}\oplus N_{i-1}$ satisfying the properties of $k$-strong formality with respect to the ideal 
	\[
		I_{i-1}(N):=(N_{i-1}+\partial N_{i-1}+\bar{\partial}N_{i-1}+\ddbar N_{i-1})\cdot\mathcal{C}(\Lambda V_{i-1}).
	\]
	If $k\ge n$, or $k=n-1$ and $\partial x_i$, $\bar{\partial}x_i\notin\ker\left(\mathcal{M}^+\to \mathcal{M}^+/\mathcal{M}^+\cdot \mathcal{M}^+\right)$, then any $\ddbar$-closed element $\eta$ of pure bidegree $(n,n)$ in the ideal 
	\begin{equation}\label{ideal_i}
		I_i(N)=I_{i-1}(N)+\langle x_i,\partial x_i,\bar{\partial}x_i,\partial\bar{\partial}x_i\rangle\cdot\mathcal{C}(\Lambda (V_{i-1} \oplus \langle x_i \rangle))
	\end{equation}
	can be written as $\eta=\eta_0+\tau\cdot x_i+\beta$, where $\eta_0,\tau\in I_{i-1}(N)$, $\beta\in\text{Im}\partial+\text{Im}\bar{\partial}$ and $\tau$ is both $\partial$ and $\bar{\partial}$-closed.
	\begin{proof}
		The proof is divided into two steps: we first prove the result for $k\ge n$, and subsequently for $k=n-1$ under the assumption that the differentials $\partial x_i$, $\bar{\partial}x_i$ are non-decomposable, i.e. $\partial x_i$, $\bar{\partial}x_i\notin\ker\left(\mathcal{M}^+\to \mathcal{M}^+/\mathcal{M}^+\cdot\mathcal{M}^+\right)$.\\
		
		\noindent\textbf{Step 1: $k\ge n$.}	By degree considerations, the only case in which there is a non-trivial power of $x_i$ is $\eta=\eta_0+\eta_1\cdot x_i+\eta_2\cdot\partial x_i+\eta_3\cdot\bar{\partial}x_i+\eta_4\cdot\ddbar x_i+\lambda x_i^2$, where $\lambda\in\C$ and $|x_i|=(n/2,n/2)$ (hence with $n$ even). Comparing the elements multiplying $x_i$ in the equation $\ddbar\eta=0$, we obtain $\ddbar\eta_1=2\lambda\ddbar x_i$. 
		
		If $\lambda\neq0$, the element $x_i-\frac{1}{2\lambda}\eta_1$ is $\ddbar$-closed; hence, by the $\ddbar$-Lemma, it can be written as $x_i-\frac{1}{2\lambda}\eta_1=c+\partial\alpha+\bar{\partial}\beta$ for some $\alpha,\beta,c\in \Lambda V$ with $dc=0$. This implies $\partial(x_i-\frac{1}{2\lambda}\eta_1)=\ddbar\beta$ and $\bar{\partial}(x_i-\frac{1}{2\lambda}\eta_1)=\ddbar(-\alpha)$, which would mean $dx_i\in d(\mathcal{C}(\Lambda V^{\le n-1}))$, contradicting the hypothesis $x_i\notin\ker\rho$. Consequently, we must have $\lambda=0$ and $\eta_1$ is $\ddbar$-closed.
		
		Now, consider a $\ddbar$-closed element $\eta$ of pure bidegree $(n,n)$:
		\begin{equation}\label{etaClaim1}
			\eta=\eta_0+\eta_1\cdot x_i+\eta_2\cdot\partial x_i+\eta_3\cdot\bar{\partial}x_i+\eta_4\cdot\ddbar x_i.
		\end{equation}
		By the minimality and the simply connectedness of the model, $\ddbar x_i\in \Lambda V_{i-1}$, which implies $\eta_4\cdot\ddbar x_i\in I_{i-1}(N)$. We may thus absorb this term into $\eta_0$ and assume, without loss of generality, that $\eta_4=0$. We then proceed by distinguishing three cases, based on the behavior of the differentials $\partial x_i$ and $\bar{\partial}x_i$.\\
		
		\noindent\textbf{Case 1.1: neither $\partial x_i$ nor $\bar{\partial}x_i$ are decomposable.} By comparing the terms multiplying $\partial x_i$ and $\bar{\partial}x_i$ in the relation $\ddbar\eta=0$, we obtain 
		\[
			\bar{\partial}\eta_1=(-1)^k\ddbar\eta_2,\quad\partial\eta_1=(-1)^{k-1}\ddbar\eta_3. 
		\]
		Adding and subtracting $(-1)^{k-1}\partial\eta_2\cdot x_i$ and $(-1)^{k-1}\bar{\partial}\eta_3\cdot x_i$ within \eqref{etaClaim1}, $\eta$ may be rewritten as
		\[
			\eta=\eta_0+\tau\cdot x_i+(-1)^{k-1}\partial(\eta_2\cdot x_i)+(-1)^{k-1}\bar{\partial}(\eta_3\cdot x_i),
		\]
		where $\tau:=\eta_1-(-1)^{k-1}\partial\eta_2-(-1)^{k-1}\bar{\partial}\eta_3$ is both $\partial$ and $\bar{\partial}$-closed as desired.\\
		
		\noindent\textbf{Case 1.2: only one of $\{\partial x_i,\bar{\partial}x_i\}$ is non-decomposable.} Assume, without loss of generality, that $\partial x_i\notin\ker\left(\mathcal{M}^+\to \mathcal{M}^+/\mathcal{M}^+\cdot\mathcal{M}^+\right)$ and $\bar{\partial}x_i$ is decomposable. We absorb $\eta_3\cdot\bar{\partial}x_i$ into $\eta_0$ and consider $\ddbar$-closed elements of the form $\eta=\eta_0+\eta_1\cdot x_i+\eta_2\cdot\partial x_i$.	As in the previous case, we have $\bar{\partial}\eta_1=(-1)^k\ddbar\eta_2$. Thus, adding and subtracting $(-1)^{k-1}\partial\eta_2\cdot x_i$, we may write
		\[
			\eta=\eta_0+\tau\cdot x_i+(-1)^{k-1}\partial(\eta_2\cdot x_i)
		\]
		with $\tau:=\eta_1-(-1)^{k-1}\partial\eta_2$. The element $\tau$ is therefore $\bar{\partial}$-closed. By \cite[Lemma 5.15]{DeligneGriffiths1975}, the $\ddbar$-Lemma implies the existence of a $\bar{\partial}$-exact element $\bar{\partial}\beta$ such that $\tau+\bar{\partial}\beta$ is $\partial$-closed (and still $\bar{\partial}$-closed). Since $k\ge n$, $\beta$ satisfies $|\beta|\le n-1$, ensuring that $\partial\beta\in\mathcal{C}(\Lambda V_{i-1})$. Adding and subtracting $\bar{\partial}\beta\cdot x_i$, we obtain
		\[
			\eta=\eta_0+(\tau+\bar{\partial}\beta)\cdot x_i-\bar{\partial}\beta\cdot x_i+(-1)^{k-1}\partial(\eta_2\cdot x_i).
		\]
		The thesis follows by observing $\bar{\partial}\beta\cdot x_i=\bar{\partial}(\beta\cdot x_i)-(-1)^{k-1}\beta\cdot\bar{\partial}x_i$.\\
		
		\noindent\textbf{Case 1.3: both $\partial x_i$ and $\bar{\partial}x_i$ are decomposable.} In this case, we absorb both $\eta_2\cdot\partial x_i$ and $\eta_3\cdot\bar{\partial}x_i$ into $\eta_0$ and consider $\eta=\eta_0+\eta_1\cdot x_i$. Since $\eta_1$ is $\ddbar$-closed, \cite[Lemma 5.15]{DeligneGriffiths1975} guarantees the existence of elements $\partial\alpha$ and $\bar{\partial}\beta$ such that $\eta_1+\partial\alpha+\bar{\partial}\beta$ is both $\partial$ and $\bar{\partial}$-closed. Proceeding as above, we conclude the proof of Step 1.\\
		
		\noindent\textbf{Step 2: $k=n-1$ and both $\partial x_i$ and $\bar{\partial}x_i$ are non-decomposable.}	Since the cbba $M$ has no $1$-degree generators, we may write $\eta$ as
		\begin{align*}
			\eta=&\eta_0+x_i\cdot(\eta_1+x_i\cdot\alpha+t\ddbar x_i)+\partial x_i\cdot(\eta_2+a\partial x_i+b\bar{\partial}x_i)\\
			&+\bar{\partial}x_i\cdot(\eta_3+c\bar{\partial}x_i)+\ddbar x_i\cdot\eta_4,
		\end{align*}
		where $\eta_0\in I_{i-1}$, $\alpha,\eta_j\in\mathcal{C}(\Lambda V_{i-1})$ for $j=1,2,3,4$ and $a,b,c,t\in\C$. We observe that $\partial x_i\cdot\partial x_i=\partial(x_i\cdot\partial x_i)$ and $\bar{\partial}x_i\cdot\bar{\partial}x_i=\bar{\partial}(x_i\cdot\bar{\partial}x_i)$; consequently, these terms lie in $\text{Im}\partial+\text{Im}\bar{\partial}$. We now distinguish two cases based on the parity of $n$.\\
		
		\noindent\textbf{Case 2.1: $n$ is even.} In this case, $x_i\cdot x_i=0$, hence we may assume $\alpha=0$. Furthermore, the following identities hold:
		\begin{align*}
			\partial(x_i\cdot\bar{\partial}x_i)&=\partial x_i\cdot\bar{\partial}x_i-x_i\cdot\ddbar x_i;\\
			\bar{\partial}(x_i\cdot\partial x_i)&=\partial x_i\cdot\bar{\partial}x_i+x_i\cdot\ddbar x_i.
		\end{align*}
		It follows that both $\partial x_i\cdot\bar{\partial}x_i$ and $x_i\cdot\ddbar x_i$ belong to $\text{Im}\partial+\text{Im}\bar{\partial}$. Thus, we may rewrite $\eta$ as
		\begin{equation}\label{eta}
			\eta=\eta_0+x_i\cdot\eta_1+\partial x_i\cdot\eta_2+\bar{\partial}x_i\cdot\eta_3+\ddbar x_i\cdot\eta_4+\partial\sigma+\bar{\partial}\xi
		\end{equation}
		which allows us to proceed as in Case 1.1.\\
		
		\noindent\textbf{Case 2.2: $n$ is odd.}	Since $\ddbar(x_i\cdot x_i)=2\partial x_i\cdot\bar{\partial}x_i+2x_i\cdot\ddbar x_i$, we may assume $b=0$ up to a $\ddbar$-exact element. Moreover, since $\ddbar x_i\in\Lambda V_{i-1}$, we can absorb $t\ddbar x_i$ into $\eta_1$. In the expression for $\ddbar\eta$, the only term with $\partial x_i\cdot\bar{\partial}x_i$ as a factor is $2\partial x_i\cdot\bar{\partial}x_i\cdot\alpha$. Since both $\partial x_i$ and $\bar{\partial}x_i$ are non-decomposable, the relation $\ddbar\eta=0$ implies $\alpha=0$. Thus, we can write $\eta$ as in \eqref{eta}, and the proof concludes as in Case 1.1.	
	\end{proof}
\end{lemma}
	
\begin{lemma}\label{technical2}
	Under the same hypotheses as in Lemma \ref{technical1}, it is possible to modify the generator $x_i$ by setting $\hat{x}_i:=x_i-\psi_i$ for some $\psi_i\in\mathcal{C}(\Lambda V_{i-1})$ such that any $\ddbar$-closed pure bidegree $(n,n)$ element in the ideal defined by \eqref{ideal_i} with $\hat{x}_i$ in place of $x_i$ belongs to $\text{Im}\partial+\text{Im}\bar{\partial}$. 
	\begin{proof}
		Let $k=|x_i|$. By Lemma \ref{technical1}, we can write a $\ddbar$-closed element $\eta$ in the ideal defined by \eqref{ideal_i} as $\eta=\eta_0+\tau\cdot x_i+\beta$, with $\eta_0\in I_{i-1}(N)$, $\tau\in\mathcal{C}(\Lambda V_{i-1})\cap\ker\partial\cap\ker\bar{\partial}$, $\beta\in\text{Im}\partial+\text{Im}\bar{\partial}$.
		
		First, suppose $[\tau]_{BC}=0$ (and then $[\tau]_A=0$). We write $\tau=\ddbar\gamma$ with $\gamma\in\mathcal{C}(\Lambda V_{i-1})$ for degree considerations. We add and subtract the necessary terms to have $\ddbar(\gamma\cdot x_i)$ in the expression of $\eta$. A direct computation yields
		\begin{align*}
			\eta=&\eta_0+\gamma\cdot\ddbar x_i+\ddbar(\gamma\cdot x_i)-(-1)^{k+1}\bar{\partial}(\gamma\cdot\partial x_i)\\
			&-(-1)^{k}\partial(\gamma\cdot\bar{\partial} x_i)+(-1)^{k+1}(\partial(\eta_2\cdot x_i)+\bar{\partial}(\eta_3\cdot x_i)).
		\end{align*}
		Since $\eta_0+\gamma\cdot\ddbar x_i\in I_{i-1}(N)$ is $\ddbar$-closed, the hypothesis on $I_{i-1}(N)$ ensures it lies in $\text{Im}\partial+\text{Im}\bar{\partial}$. Thus $\eta\in\text{Im}\partial+\text{Im}\bar{\partial}$. 
		
		Next, assume $[\tau]_{BC}\neq0$. Following the argument in the proof of \cite[Theorem 3.1]{fernandezmunoz}, we aim to modify $x_i$ with an element $\psi_i\in\mathcal{C}(\Lambda V_{i-1})^k$ such that, for $\hat{x}_i=x_i-\psi_i$, each $\ddbar$-closed element $\eta=\eta_0+\tau\cdot\hat{x}_i$ lies in $\text{Im}\partial+\text{Im}\bar{\partial}$. 
		
		Let $|x_i|=(p,q)$. Consider the collection of $\ddbar$-closed elements $z_j\in(\Lambda V)^{n-p,n-q}$ for which there exists $k_j\in (I_{i-1}(N))^{n,n}$ such that $k_j+z_j\cdot x_i$ is $\ddbar$-closed (this condition is verified up to elements in $\text{Im}\partial+\text{Im}\bar{\partial}$). Any such element satisfies
		\begin{equation}\label{zetaj}
			k_j+z_j\cdot x_i=\lambda_j\omega+\xi_j,
		\end{equation}
		where $\lambda_j\in\C$, $\xi_j\in\text{Im}\partial+\text{Im}\bar{\partial}$, and $\omega$ is a representative of the class generating $H^{n,n}_{A}(\Lambda V)$. The value of $\lambda_j$ is uniquely determined by $z_j$: indeed, if there exist elements as above such that
		\[
			k'_j+z_j\cdot x_i=\lambda'_j\omega+\xi'_j,\quad k_j+z_j\cdot x_i=\lambda_j\omega+\xi_j,
		\]
		subtracting the two expressions would yield $k'_j-k_j=(\lambda'_j-\lambda_j)\omega+(\xi'_j-\xi_j)$. The right-hand side is $\ddbar$-closed, while the left-hand side lies in $I_{i-1}(N)$. Thus, the cohomology class $(\lambda'_j-\lambda_j)[\omega]_A$ must vanish in $H^{n,n}_A(\Lambda V)$. As $[\omega]_A\neq0,$ this implies $\lambda'_j=\lambda_j$. Our goal is now to modify $x_i$ so that $\lambda_j=0$ for all such $z_j$.
		
		Since we have already addressed the case where $[\tau]_A=0$, we may restrict our attention on those $z_j$ such that $[z_j]_A\neq0$. We consider a collection of $\ddbar$-closed elements $\{z_j\}_j$ whose cohomology classes $\{[z_j]_{A}\}_j$ form a basis of $H^{n-p,n-q}_{A}(\Lambda V)$. The general result for an arbitrary $\tau$ then follows by linearity. 
		
		As $H^{n-p,n-q}_{A}(\Lambda V)$ is finite-dimensional, the non-degeneracy of the pairing defined by the $n$-SD condition ensures the existence of a Bott-Chern cohomology class $[\psi_i]_{BC}\in H^{p,q}_{BC}(\Lambda V)$ such that, for each $j$,
		\[
			[z_j]_{A}\cdot[\psi_i]_{BC}=\lambda_j[\omega]_A.
		\]
		In terms of representatives, this means that $z_j\cdot\psi_i=\lambda_j\omega+\theta_j$ for some $\theta_j\in\text{Im}\partial+\text{Im}\bar{\partial}$. We then define the modified generator as $\hat{x}_i:=x_i-\psi_i$. Notably, since $\psi_i$ is $d$-closed, it must lie in $\Lambda (V^{\le k-1}\oplus\ker\rho)\subseteq\mathcal{C}(\Lambda V_{i-1})$. 
		
		By \eqref{zetaj} and the definition of $\psi_i$, it follows immediately that any $\ddbar$-closed element of the form $k+z\cdot\hat{x}_i$ belongs to $\text{Im}\partial+\text{Im}\bar{\partial}$. In particular, this holds for any $\eta_0+\tau\cdot\hat{x}_i$. Thus, repeating the argument in Lemma \ref{technical1} with the new generator $\hat{x}_i$ concludes the proof.
	\end{proof}
\end{lemma}	

We are now in a position to prove an analogue of \cite[Theorem 3.1]{fernandezmunoz} in the bigraded setting. We provide a purely algebraic proof of the statement, which will be applied to the geometric setting in the following section.

\begin{theorem}\label{halfAlg}
	Let $\mathcal{M}=\Lambda V$ be a simply connected minimal $n$-SD cbba. Then $\mathcal{M}$ is strongly formal if and only if it is $(n-1)$-strongly formal.
	\begin{proof}
		In view of Proposition \ref{prop}, we only need to prove the converse implication. Our goal is to establish the $2n$-strong formality of $\mathcal{M}$ which, again by Proposition \ref{prop}, implies strong formality. We proceed by induction on the degree of the generators to construct the splitting $V^{p,q}=C^{p,q}\oplus N^{p,q}$ satisfying properties \ref{iii} and \ref{(iv)} of Definition \ref{sstrong}.
		
		Since $\mathcal{M}$ is $(n-1)$-strongly formal, Proposition \ref{ddbarlemma} ensures that the global $\ddbar$-Lemma holds. Then, it suffices to show that $(n+r-1)$-strong formality implies $(n+r)$-strong formality for each $r\ge 0$. Suppose $\mathcal{M}$ is $(n+r-1)$-strongly formal, meaning that the generators up to degree $n+r-1$ satisfy the required properties. Let $\rho$ be the map defined in \eqref{rho} for $k=n+r$. Let $\{x_1,...,x_p\}$ span $\ker\rho$ and complete them to a basis $\{x_1,...,x_{n_{n+r}}\}$ for $V^{n+r}$, ordered according to the minimality of the model: $dx_i\in\Lambda (V^{\le n+r-1}\oplus\langle x_1,..,x_{i-1}\rangle)\oplus V^{n+r+1}$. We can choose these generators to be of pure bidegrees (cf. \cite[Definition 2.3]{Stelzig_2025}). 
		
		Let $V_i:=V^{\le n+r-1}\oplus\langle x_1,...,x_i\rangle$. We prove by induction on $i\in\{1,...,n_{n+r}\}$ that each generator can be modified as $\hat{x}_i=x_i-\psi_i$, with $\psi_i\in \mathcal{C}(\Lambda \hat{V}_{i-1})$ and $\hat{V}_{i-1}:=V^{\le n+r-1}\oplus\langle\hat{x}_1,...,\hat{x}_{i-1}\rangle$ such that the space $\hat{V}_i$ admits a splitting $\hat{C}_i\oplus\hat{N}_i$ satisfying the conditions for $(n+r)$-strong formality. We note that such a modification of the generators preserves the subalgebra, i.e. $\mathcal{C}(\Lambda \hat{V}_i)=\mathcal{C}(\Lambda V_i)$.\\

		\noindent\textbf{Case 1: $x_i\in\ker\rho$.} By definition of $\rho$, there exists an element $\psi_i\in\mathcal{C}(\Lambda V^{\le n+r-1})$ such that $dx_i=d\psi_i$. We then define the modified generator $\hat{x}_i=x_i-\psi_i$, which is both $\partial$ and $\bar{\partial}$-closed. As established in the proof of Theorem \ref{characterization} for the element $b_y$, $\psi_i$ can be chosen of pure bidegree. We then set $\hat{C}_i=\hat{C}_{i-1}\oplus\langle\hat{x_i}\rangle$ and $\hat{N}_i=\hat{N}_{i-1}$. Let $I_i(\hat{N}):=(\hat{N}_{i}+\partial\hat{N}_{i}+\bar{\partial}\hat{N}_{i}+\ddbar\hat{N}_{i})\cdot\mathcal{C}(\Lambda \hat{V}_i)$ be the associated ideal. To verify condition \ref{iii} of Definition \ref{sstrong}, let $\eta\in I_i(\hat{N})$ be a $\partial$ and $\bar{\partial}$-closed element. We may thus write $\eta$ as
		\[
			\eta=\eta_0+\eta_1\cdot \hat{x}_i+...+\eta_k\cdot\hat{x}_i^k,
		\]
		where $\eta_j\in I_{i-1}(\hat{N})$ (note that in this case $I_i(\hat{N})=I_{i-1}(\hat{N})\cdot\Lambda(\hat{x}_i)$). Conditions $\partial\eta=0$ and $\bar{\partial}\eta=0$ imply $\partial\eta_j=0$ and $\bar{\partial}\eta_j=0$ for each $j=1,...,k$. By the inductive hypothesis, each $\eta_j$ is $\ddbar$-exact, which ensures $\eta\in\text{Im}\ddbar$.
		
		Similarly, to verify condition \ref{(iv)} of Definition \ref{sstrong}, let $\eta\in I_i(\hat{N})$ be $\ddbar$-closed. If we write $\eta$ as above, condition $\ddbar\eta=0$ implies $\ddbar\eta_j=0$ for each $j$. Again, by the inductive hypothesis, each $\eta_j$ belongs to $\text{Im}\partial+\text{Im}\bar{\partial}$, hence $\eta\in\text{Im}\partial+\text{Im}\bar{\partial}$.\\
		
		\noindent\textbf{Case 2: $x_i\notin\ker\rho$.} We set $\hat{C}_i=\hat{C}_{i-1}$, $N_i=\hat{N}_{i-1}\oplus\langle x_i\rangle$. We observe that if $x_i$ is modified by an element $\psi_i\in\mathcal{C}(\Lambda V^{\le n+r-1}\oplus\ker\rho)$, the differential $d$ remains injective on $\hat{x}_i=x_i-\psi_i$: indeed, $x_i\notin\ker\rho$ implies $dx_i\neq d\psi_i$.
		
		To establish property \ref{iii} of Definition \ref{sstrong}, it suffices to prove condition \ref{(iv)} for the ideal $I_i(\hat{N})$. Any $\partial$ and $\bar{\partial}$-closed element $\eta\in I_i(\hat{N})$ is, in particular, $\ddbar$-closed. If property \ref{(iv)} holds, then $\eta\in\text{Im}\partial+\text{Im}\bar{\partial}$. By the $\ddbar$-Lemma, we have $(\text{Im}\partial+\text{Im}\bar{\partial})\cap\ker\partial\cap\ker\bar{\partial}=\text{Im}\ddbar$ (cf. \cite[Lemma 5.15]{DeligneGriffiths1975}), which ensures $\eta\in\text{Im}\ddbar$.
		
		We now prove condition \ref{(iv)} of Definition \ref{sstrong}. This condition is trivially satisfied for elements with bidegree $(p,q)$ with $H_A^{p,q}(\Lambda V)=0$, in particular for $p>n$ or $q>n$ by the $n$-SD condition. Thus, we may focus on bidegrees with $p,q\le n$. Let $\eta\in I_i(N)$ be a $\ddbar$-closed element. If $|\eta|=(n,n)$, then by Lemmas \ref{technical1} and \ref{technical2}, condition \ref{(iv)} is satisfied up to considering a modified generator $\hat{x}_i=x_i+\psi_i$. We then assume $\eta\in I_i(\hat{N})$.
		
		Suppose $|\eta|<2n$. If $H_A^{|\eta|}(\Lambda V)=0$, $\eta$ is trivially in $\text{Im}\partial+\text{Im}\bar{\partial}$. Otherwise, the $n$-SD condition implies $H_{BC}^{(n,n)-|\eta|}(\Lambda V)\neq0$ by duality. Let $[\sigma]_{BC}\in H_{BC}^{(n,n)-|\eta|}(\Lambda V)$ be an arbitrary element and consider the product $[\sigma]_{BC}\cdot[\eta]_A=[\sigma\cdot\eta]_A$. 
		
		If $|\eta|>n$, then $|\sigma|<n$ and $\sigma\cdot\eta\in(I_{i}(\hat{N}))^{n,n}$ by degree reasons. 
		
		If $|\eta|=n$, then $|\sigma|=n$. Since the induction began at degree $n$, we can write $\sigma=\sigma_0+\sum_{j=1}^{n_n}c_jx_j$ with $\sigma_0\in\Lambda V^{<n}$ and $c_j\in\C$. Since $\sigma$ is $d$-closed, the only $x_j$ appearing in the linear combination are those spanning $\ker\rho$. Thus, $\sigma\in\Lambda V_{i-1}$, which again implies $\sigma\cdot\eta\in(I_{i}(\hat{N}))^{n,n}$.
		
		By the previous argument for bidegree $(n,n)$, the $\ddbar$-closed element $\sigma\cdot\eta$ lies in $\text{Im}\partial+\text{Im}\bar{\partial}$, hence $[\sigma\cdot\eta]_A=0$. Since this holds for any $[\sigma]_{BC}$, the non-degeneracy of the pairing implies $[\eta]_A=0$, which yields $\eta\in\text{Im}\partial+\text{Im}\bar{\partial}$.
	\end{proof}
\end{theorem}

Another application of Lemmas \ref{technical1} and \ref{technical2} is provided by the following result, which is geometrically motivated by the Lefschetz hyperplane Theorem (see \cite[Proposition 5.2.6]{Huybrechts2005}).

\begin{theorem}\label{hypThmAlg}
	Consider a morphism of cbba
	\[
		\rho:(B,\partial_B,\bar{\partial}_B)\to(A,\partial,\bar{\partial})
	\]
	such that the following properties hold:
	\begin{enumerate}
		\item \label{1hyp} $(B,\partial_B,\bar{\partial}_B)$ is a cohomologically simply connected, augmented cbba satisfying the $\ddbar$-Lemma, whose minimal model is a degree-wise finite-dimensional $(n-1)$-strongly formal cbba with $n\ge2$;
		\item \label{2hyp} $(A,\partial,\bar{\partial})$ is an augmented $n$-SD cbba satisfying the $\ddbar$-Lemma;
		\item \label{3hyp} $H^{\le n-1}_*(\rho)$ are isomorphisms and $H^{n}_*(\rho)$ is injective for $*\in\{BC, A\}$.
	\end{enumerate}
	Then $(A,\partial,\bar{\partial})$ is strongly formal.
	\begin{proof}
		To simplify the proof, we divide it into two steps: we first construct a model for $A$ starting from the model of $B$, and then we provide its $(n-1)$-strong formality.\\
		
		\noindent\textbf{Step 1: Construction of a minimal model for $A$.} Let $M_B=\Lambda V_B\to B$ be the $(n-1)$-strongly formal bigraded minimal model for $B$ and consider the composition $\psi:M_B\to B\overset{\rho}{\rightarrow}A$. It follows that $H_*^{\le n-1}(\psi)$ are isomorphisms and $H^n_*(\psi)$ is injective for $*\in\{BC, A\}$. 
		
		As a first step, we write $H^n(A)=\text{Im}H^n(\psi)\oplus H$ and define $M':=\Lambda(V_B\oplus H)$. We extend $\psi$ to a morphism $\psi':M'\to A$ by mapping each element of $H$ to one of its $\partial$ and $\bar{\partial}$-closed representatives. Since $A$ satisfies the $\ddbar$-Lemma, these representatives do not lie in $\text{Im}\partial+\text{Im}\bar{\partial}$. By construction, the maps $H^{\le n}(\psi')$ are isomorphisms. Moreover, $M'$ satisfies the $\ddbar$-Lemma, as the additional generators are closed and not in the image of $\partial$ or $\bar{\partial}$; in other words, we only added dots to $M_B$. Thus, by \cite[Theorem 5.17]{DeligneGriffiths1975}, $M'$ still satisfies the $\ddbar$-Lemma.
		
		Now, consider the cbba $\hat{M}:=\mathcal{C}(\Lambda(V_B^{\le n}\oplus H))$ and the restriction $\hat{\psi}:={\psi'}_{|_{\hat{M}}}$. The ideal $I_{n-1}'\subseteq M'$, defined as in Definition \ref{sstrong}, coincides with $I_{n-1}^B\subseteq M_B$ and does not contribute to the cohomology of $M'$. Furthermore, since the map $\hat{M}\to A$ factors through $B$ for generators of degree less than $n$, $I_{n-1}'$ does not contribute to the cohomology of $\hat{M}$.
		
		Our goal is to employ the algorithm in \cite[Theorem 2.34]{Stelzig_2025} to complete the partial model to $\hat{\psi}:\hat{M}\to A$. Before applying the algorithm, we augment $\hat{M}$ with additional generators and extend $\hat{\psi}$ to a map $\psi_R$ such that $H^{\le n}_{A}(\psi_R)$ and $H^{\le n}_{BC}(\psi_R)$ remain isomorphisms, and $H_{BC}^{n+1}(\psi_R)$ is injective. Let $K\subseteq(\hat{M})^{n+1}$ be a subspace mapping isomorphically onto $\ker(H^{n+1}_{BC}(\hat{\psi}))$; that is, $K$ is spanned by a set of representatives for $\ker(H^{n+1}_{BC}(\hat{\psi}))$. Note that $K$ cannot be generated by $\partial$ or $\bar{\partial}$-exact elements. Indeed, any $\partial$ or $\bar{\partial}$-exact generator of $K$ would be $\ddbar$-exact in $M'$ by the $\ddbar$-Lemma, in contradiction to $K$ defining non-trivial Bott-Chern cohomology classes. Moreover, $K$ is decomposable: the only generators in $(\hat{M})^{n+1}$ lie in $\text{Im}\partial+\text{Im}\bar{\partial}$, while $K\cap(\text{Im}\partial+\text{Im}\bar{\partial})=\emptyset$ as observed. 
		
		Define formal copies $R:=K[-1,-1]$, $\partial R:=K[0,-1]$, and $\bar{\partial}R:=K[-1,0]$. Using the notation $R=\langle r_k\,|\,k \text{ generator of } K\rangle$, we define the cbba
		\[
			M_R:=\hat{M}(R,\partial R,\bar{\partial}R\,|\,\ddbar r_k=k\quad\forall k\text{ generator of } K).
		\]
		Since $K$ consists of decomposable elements, $M_R$ remains a minimal cbba. By extending $\hat{\psi}$ to a map $\psi_R$ that sends each $r_k$ to a $\ddbar$-primitive of $\hat{\psi}(k)$, we obtain a cbba morphism $\psi_R:M_R\to A$ such that $H_A^{\le n}(\psi_R)$ and $H_{BC}^{\le n}(\psi_R)$ are isomorphisms, and $H^{n+1}_{BC}(\psi_R)$ is injective. Accordingly, we set $N_R^{\le n-2}=N_B^{\le n-2}$, $N_R^{n-1}=N_B^{n-1}\oplus R$.
		
		We now apply the algorithm developed in \cite[Theorem 2.34]{Stelzig_2025}. Since surjectivity in $H_A^n(\psi_R)$ is already satisfied by construction, the algorithm proceeds by establishing injectivity in $H_{BC}^{n+2}$. To this end, it supplements the partial model $M_R$ with additional generators in degree $n$ and $n+1$ to reduce the Bott-Chern cohomology in degree $n+2$. Notably, these generators do not modify the ideal $I_{n-1}^R$. This remains true in the subsequent steps of the algorithm: to ensure surjectivity in $H_A^{\ge n+1}$ and injectivity in $H_{BC}^{\ge n+3}$, we only need to add generators in degrees $\ge n+1$; hence, $I_{n-1}^R$ remains invariant throughout the process.
		
		This construction yields to a holomorphically simply connected minimal model $\varphi:M_A\to A$ such that $\varphi_{|_{M_R}}=\psi_R$ and $I_{n-1}^A=I_{n-1}^R$, satisfying the $\ddbar$-Lemma. Moreover, the ideal $I_{n-1}^B\subseteq I_{n-1}^A$ does not contribute to the cohomology of $M_A$, as $\varphi$ still factors through $B$ on generators of degree $\le n-1$.\\
		
		\noindent\textbf{Step 2: The model $M_A$ is $(n-1)$-strongly formal.} We show that the argument from the proof of Theorem \ref{halfAlg} can be applied to elements in $I_{n-1}^A$, allowing us to conclude that this ideal does not contribute to the cohomology of $M_A$. Since $n\ge2$, $A$ remains connected and holomorphically simply connected by hypothesis \ref{3hyp}. We proceed by induction on the generators of $R$. Specifically, we assume that the ideal $I_{j-1}$, which incorporates the generators $r_1,...,r_{j-1}$, does not contribute to the cohomology of the model. We then show that $\ddbar$-closed elements in the ideal $I_j$, obtained by incorporating $r_j$ among its generators, can be written as in \eqref{etaClaim1}; this allows us to proceed as in the proof of Theorem \ref{halfAlg}. The base case is satisfied since $I_0=I_{n-1}^B$.
		
		Let $N_{j-1}:=N_B^{\le n-1}\oplus\langle r_1,...,r_{j-1}\rangle$ and $V_{j-1}:=V_B^{\le n-1}\oplus\langle r_1,...,r_{j-1}\rangle$. The ideal $I_j$ can be written as in the hypotheses of Lemma \ref{technical1}. As noted above, condition \ref{iii} of Definition \ref{sstrong} is implied by condition \ref{(iv)} and the $\ddbar$-Lemma; thus, it suffices to prove that any $\ddbar$-closed element in $I_j$ lies in $\text{Im}\partial+\text{Im}\bar{\partial}$. 
		
		Consider an arbitrary $\ddbar$-closed element $\eta\in I_j$. If $|\eta|\ge2n$, the $n$-SD condition together with Lemmas \ref{technical1} and \ref{technical2} imply $\eta\in\text{Im}\partial+\text{Im}\bar{\partial}$. Following the argument in Theorem \ref{halfAlg}, we analyse the cases $|\eta|=n-1$ and $|\eta|=n$ separately to reduce the proof to the $(n,n)$-degree case via multiplication by a generic element of $H^{(n,n)-|\eta|}_{BC}(M_A)$. 
		
		If $|\eta|=n-1$, then $\eta=\eta_0+c\cdot r_j$ with $\eta_0\in I_{j-1}$ and $c\in\C$. Here, $\eta$ is $\ddbar$-closed if and only if $c=0$ (since $\ddbar r_j\notin\ddbar I_{j-1}$ by definition of $r_j$); thus $\eta=\eta_0\in I_{j-1}$, which implies $[\eta]_A=[\eta_0]_A=0$.
		
		If $|\eta|=n$, since $M_A$ lacks $1$-degree generators, we have $\eta=\eta_0+a\partial r_j+b\bar{\partial}r_j$ with $\eta_0\in I_{j-1}$ and $a,b\in\C$. It follows that $[\eta]_A=[\eta_0]_A=0$.
		
		The case $|\eta|=2n-1$ also requires separate considerations, as linear combinations of $r_j\wedge\partial r_j$ and $r_j\wedge\bar{\partial}r_j$ could potentially arise (unlike the cases $|\eta|=2n$ or $|\eta|<2n-1$). However, $H^{2n-1}(A)=0$ due to the simply connectedness of $A$.
		
		By an argument analogous to the one used in the proof of Theorem \ref{halfAlg}, we conclude that $I_j$ does not contribute to the cohomology of $M_A$ for any $j$. Consequently, $M_A$ is $(n-1)$-strongly formal and, by Theorem \ref{halfAlg}, it is strongly formal.
	\end{proof}
\end{theorem}

\section{Applications in complex geometry}\label{section3}

In this section, we apply the results established in the previous section to prove the strong formality of several classes of complex manifolds. Since we consider $(n-1)$-strongly formal $n$-dimensional complex manifolds, by Proposition \ref{ddbarlemma}, without loss of generality, we may assume the $\ddbar$-Lemma to hold. We observe that minimal models of $n$-dimensional, compact, connected manifolds are in particular $n$-SD minimal cbbas. Thus, Theorem \ref{halfAlg} immediately yields the following result.

\begin{theorem}\label{half}
	Let $X$ be a compact, connected $\ddbar$-manifold with $H_{dR}^1(X)=0$ and $\dim_{\C}(X)=n$. Then $M$ is strongly formal if and only if $X$ is $(n-1)$-strongly formal.
\end{theorem}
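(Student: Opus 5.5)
The plan is to reduce Theorem \ref{half} to its algebraic counterpart, Theorem \ref{halfAlg}, by checking that the bigraded minimal model of $X$ satisfies all its hypotheses. Strong formality and $s$-strong formality of $X$ are both defined through a bigraded minimal model of the cbba of forms $A_X$. Strong formality is, moreover, invariant under weak equivalence. So it suffices to produce a minimal model $\mathcal{M}=\Lambda V\to A_X$ and verify three things: $\mathcal{M}$ is minimal, $\mathcal{M}$ is simply connected, and $\mathcal{M}$ is $n$-SD.

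First, existence and simple connectivity. Since $X$ is a compact $\ddbar$-manifold, the Fr\"olicher spectral sequence degenerates and $H^1_{dR}(X)\cong\bigoplus_{p+q=1}H^{p,q}_{BC}(X)\cong\bigoplus_{p+q=1}H^{p,q}_A(X)$. The hypothesis $H^1_{dR}(X)=0$ therefore gives $H^1_A(X)=0$, i.e.\ $X$ is holomorphically simply connected. Theorem \ref{thmexistence} then provides a simply connected, degree-wise finite dimensional bigraded minimal model $\mathcal{M}\to A_X$, unique up to isomorphism.

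Second, the $n$-SD property, which can be read off on $A_X$ because $\mathcal{M}\to A_X$ induces isomorphisms in Bott-Chern and Aeppli cohomology (and the multiplicative pairing is compatible with these isomorphisms). For compact $X$, the Bott-Chern and Aeppli cohomologies are finite dimensional by ellipticity, and they vanish outside $0\le p,q\le n$. We have $H^{n,n}_A(X)\cong\C$ via integration, using connectedness. Finally, Serre-type duality for Bott-Chern/Aeppli cohomology (Schweitzer) gives the perfect pairing $H^{p,q}_{BC}\times H^{n-p,n-q}_A\to H^{n,n}_A\cong\C$, $([\alpha],[\beta])\mapsto\int_X\alpha\wedge\beta$. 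The model is augmented and connected since $\mathcal{M}^0=\C$.

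Third, apply Theorem \ref{halfAlg}. If $X$ is $(n-1)$-strongly formal, then some bigraded minimal model is $(n-1)$-strongly formal. By uniqueness of the minimal model up to isomorphism (and since the definition allows any choice of generators), $\mathcal{M}$ is $(n-1)$-strongly formal, hence strongly formal by Theorem \ref{halfAlg}, and so is $A_X$. Conversely, if $X$ is strongly formal, so is $\mathcal{M}$, and Theorem \ref{halfAlg} (or Proposition \ref{prop}) yields $(n-1)$-strong formality.

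The main obstacle, if any, is the $n$-SD verification: in particular, making sure the duality pairing on $A_X$ transfers to $\mathcal{M}$ as the product-induced pairing in Definition \ref{SD}. This should follow because the quasi-isomorphism is multiplicative and induces isomorphisms on both $H_{BC}$ and $H_A$, compatibly with the products $H_{BC}\times H_A\to H_A$.
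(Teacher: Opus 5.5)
Your proposal is correct and follows the paper's route exactly. You use $H^1_{dR}(X)=0$ together with the $\ddbar$-Lemma to get holomorphic simple connectivity, hence a simply connected minimal model via Theorem \ref{thmexistence}; this model is $n$-SD by Bott--Chern/Aeppli duality on the compact manifold, and Theorem \ref{halfAlg} then applies in both directions. You are also more explicit than the paper, which simply asserts the $n$-SD property and does not remark that $(n-1)$-strong formality is independent of the chosen minimal model; you handle both points correctly.
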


The triviality of the first de Rham cohomology group of the manifold and the $\ddbar$-Lemma imply its holomorphically simply connectedness, hence the existence of a minimal model in this hypotheses is guaranteed by Theorem \ref{thmexistence}. Theorem \ref{half} can be employed to establish the strong formality of various classes of compact complex manifolds. We first consider compact K\"ahler manifolds with central cohomology of width $l$, i.e., K\"ahler manifolds $(X,\omega)$ such that, for $k<\dim_{\C} X-l$, $H^k(X,\C)=0$ if $k$ is odd, and $H^k(X,\C)=\C\langle[\omega^{\frac{k}{2}}]\rangle$ if $k$ is even. Note that for $l=0$, we include the case of complete intersections (cf. \cite[\S3.2]{Stelzig_2025}).

\begin{theorem}\label{centralcohomology}
	Any compact K\"ahler manifold $(X,\omega)$ with $\dim_{\C} X=n\ge2$ and central cohomology of width $\frac{n}{2}-1$ is strongly formal. For $n=4m-1$, the same result holds also if $h^{m,m}(X)=2$.
	\begin{remark}
		The width $\frac{n}{2}-1$ is optimal: the example constructed in \cite{placini2024nontrivialmasseyproductscompact} is a compact K\"ahler manifold that is not strongly formal; this manifold is simply connected (and hence possesses central cohomology of width $\frac{n}{2}$), but it lacks central cohomology of width $\frac{n}{2}-1$. 
	\end{remark}
	\begin{proof}
		The proof consists in the construction of an $(n-1)$-strongly formal minimal model for the manifold, ensuring its strong formality by Theorem \ref{half}. We first prove the statement for manifolds with central cohomology. We begin by constructing a partial bigraded model for $A_X$ and then employ the algorithm in \cite[Theorem 2.34]{Stelzig_2025} to complete it to a minimal model. Since the width is required to be $\frac{n}{2}-1$, excluding the powers of the K\"ahler class, there are no non-trivial cohomology classes in degree less than $\frac{n}{2}+1$.
		
		Let $x$ be an element of bidegree $(1,1)$ and consider the cbba $\mathcal{M}_1:=\Lambda(x)$, equipped with the morphism $\varphi_1:\mathcal{M}_1\to A_X$ defined by $\varphi_1(x)=\omega$. We denote by $P_k$ a set of representatives for the primitive cohomology $P^k(X,\C)$ in degree $k$. Recall that $P^k(X,\C)$ consist of cohomology classes $[\alpha]$ such that $\Lambda[\alpha]=0$, where $\Lambda$ is the adjoint of the Lefschetz operator $L:[\alpha]\to[\omega]\wedge[\alpha]$. By the Hard Lefschetz decomposition, we have 
		\[
			H^k(X,\C)=\bigoplus_{j\ge0}L^j(P^{k-2j}(X,\C))\cong\bigoplus_{j\ge0}x^j\wedge P_{k-2j}.
		\]
		Since $L^{n-k}:H^k(X,\C)\to H^{2n-k}(X,\C)$ is an isomorphism, there are no primitive forms in degrees greater than $n$. We then augment $\mathcal{M}_1$ with all the primitive representatives $P_k$ for $\frac{n}{2}+1\le k\le n$, thereby obtaining the minimal cbba $\mathcal{M}_2:=\Lambda(\mathcal{M}_1\oplus\bigoplus_kP_k)$. We define $\varphi_2:\mathcal{M}_2\to A_X$ by extending $\varphi_1$ such that $\varphi_2(p)=p$ for each $p\in P_k$ in the range $\frac{n}{2}+1\le k\le n$. 
		
		Due to the width limitation, the product of any two primitives has degree at least $n+2$. Similarly, by the Hard Lefschetz decomposition, the first trivial product between a power of the K\"ahler class and a primitive class occurs in degree greater or equal to $n+2$ (specifically, $[\omega]\wedge P^n(X,\C)=0$). Thus, $\varphi_2$ induces isomorphisms in cohomology up to degree $n+1$. 
		
		We employ the algorithm from \cite[Theorem 2.34]{Stelzig_2025} to complete the partial model $(\mathcal{M}_2,\varphi_2)$ to a minimal model $(\mathcal{M},\varphi)$, starting from degree $n+1$. Note that this algorithm proceeds inductively, building the model in degree $k$ by only adding generators in degrees $k$ and $k-1$. According to the algorithm, the model is fixed in degree $k$ whenever we have injectivity in $H_{BC}^{\le k+1}(\varphi)$ and surjectivity in $H_A^{\le k-1}(\varphi)$. Consequently, our partial model $(\mathcal{M}_2,\varphi_2)$ is modified only starting from degree $n$.
		
		Once the complete model $(\mathcal{M},\varphi)$ is obtained, we consider the generators in degrees less then or equal to $n-1$, specifically $V^2=\C\langle x\rangle$ and $V^k=P_k$ for $\frac{n}{2}+1\le k\le n-1$. The differential vanishes on these generators, $d(V^{\le n-1})=0$, by construction. We then define $C^{p,q}:=V^{p,q}$, $N^{p,q}:=0$ for $p+q\le n-1$. Conditions \ref{iii} and \ref{(iv)} of Definition \ref{sstrong} are trivially satisfied since the ideal $I_{n-1}$ is zero, while the global $\ddbar$-Lemma holds due to the K\"ahler condition. It follows that the manifold is $(n-1)$-strongly formal and hence, by Theorem \ref{half}, it is strongly formal.
		
		Suppose now that $n=4m-1$ and, in addition to the central cohomology assumption, we have $h^{m,m}(X)=2$. We begin by taking $\Lambda(x)$ and adding the primitive representatives. In degree $2m$, the generators are given by $V^{2m}=\C\langle x^{m}, \eta\rangle$. By the Hard Lefschetz decomposition, we necessarily have
		\[
			[\eta^2]_{BC}\in \bigoplus_{j>0}[\omega^j]_{BC}\wedge P^{4m-2j}(X,\C),\quad\text{then}\quad[\eta^2]_{BC}=\sum_{j>0}[\omega^j\wedge\alpha_{4m-2j}]_{BC},
		\]
		where $\alpha_{4m-2j}\in P_{4m-2j}$. We augment the partial model with a generator $\xi$ of degree $4m-2=n-1$ subject to the condition $\ddbar\xi=\eta^2-\sum_{j>0}x^j\wedge\alpha_{4m-2j}$. We then define $N^{n-1}=\C\langle\xi\rangle$ and observe that the ideal $I_{n-1}=\C\langle\xi,\partial\xi,\bar{\partial}\xi,\ddbar\xi\rangle\cdot\mathcal{C}(\Lambda V^{\le n-1})$ does not contribute to the cohomology of the entire model. Indeed, an arbitrary element in $I_{n-1}$ can be written as
		\begin{align*}
			\gamma=&\xi\wedge(a_0+\xi\wedge a_1+\partial\xi\wedge a_2+\bar{\partial}\xi\wedge a_3)+\partial\xi\wedge(b_0+\bar{\partial}\xi\wedge b_1)\\
			&+\bar{\partial}\xi\wedge c_0+\ddbar\xi\wedge d_0,
		\end{align*}
		where $a_i,b_i,c_0,d_0\in\Lambda C^{\le n-1}$. Imposing $\partial\gamma=0$ and $\bar{\partial}\gamma=0$ yields
		\[
			\gamma=\frac{1}{2}\ddbar(\xi^2)\wedge b_1+\ddbar\xi\wedge d_0
			=\ddbar\left(\frac{1}{2}\xi^2\wedge b_1+\xi\wedge d_0\right),
		\]
		which implies that $I_{n-1}$ does not contribute to the Bott-Chern cohomology of the entire model. Similarly, if we require $\ddbar\gamma=0$, we obtain
		\begin{align*}
			\gamma=&\frac{1}{2}\ddbar(\xi^2)\wedge b_1+\frac{1}{2}\partial(\xi^2)\wedge a_2+\frac{1}{2}\bar{\partial}(\xi^2)\wedge a_3+\partial\xi\wedge b_0+\bar{\partial}\xi\wedge c_0+\ddbar\xi\wedge d_0\\
			=&\partial\left(\frac{1}{2}\xi^2\wedge a_2+\xi\wedge b_0\right)+\bar{\partial}\left(\frac{1}{2}\xi^2\wedge a_3+\xi\wedge c_0\right)+\ddbar\left(\frac{1}{2}\xi^2\wedge b_1+\xi\wedge d_0\right),
		\end{align*}
		which implies that $I_{n-1}$ does not contribute to the Aeppli cohomology of the entire model. As the properties in Definition \ref{sstrong} are fully satisfied, it follows that the partial model is $(n-1)$-strongly formal. Finally, applying the algorithm starting from degree $n+1$ yields the strong formality of $X$.
	\end{proof}
\end{theorem}

As a consequence of this result, we have established the strong formality of certain Fano manifolds.

\begin{corollary}
	Fano threefolds with Picard group of rank $1$ or $2$ and Fano fourfolds with Picard group of rank $1$ are strongly formal.
\end{corollary}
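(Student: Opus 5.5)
The plan is to reduce the corollary to Theorem \ref{centralcohomology}. Fano manifolds are projective, hence compact K\"ahler, and by Kodaira vanishing $h^{p,0}=0$ for $p>0$. In particular $H^1_{dR}(X)=0$ and $H^2(X,\C)=H^{1,1}(X)$, so $b_2$ equals the Picard rank. We then check, case by case, that the Hodge diamond has the shape required by Theorem \ref{centralcohomology}: width $\frac n2-1$, with the extra allowance $h^{m,m}=2$ when $n=4m-1$.

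\textbf{Threefolds} ($n=3$). Width $\frac12$ means that for $k<3-\frac12$, i.e.\ $k\le 2$, we need $H^1=0$ and $H^2=\C\langle[\omega]\rangle$. This is exactly $b_1=0$ and $b_2=1$, which holds for Picard rank $1$. The odd cohomology $H^3$ in the middle degree is unconstrained, so all rank one Fano threefolds are covered. For Picard rank $2$ we use the second clause with $n=3=4\cdot1-1$, so $m=1$. The hypothesis $h^{1,1}=2$ is precisely rank $2$. We must check that the proof of the $n=4m-1$ case only needs central cohomology of width $\frac n2-1$ apart from the extra class in degree $2m=2$. In degree $\le 2$ the only extra class is $\eta\in H^{1,1}$, and $H^1=0$. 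This is the setting treated in the proof: $V^2=\langle x,\eta\rangle$, the generator $\xi$ of degree $2=n-1$ kills the relation expressing $\eta^2$ in terms of $\omega\wedge P^2=0$ and $\omega^2$. Here I expect a small issue: for $m=1$ the formula involves $j=2$ ($\omega^2$ with $\alpha_0$ a constant), and $P^{2}$ is spanned by the primitive part of $\eta$. So I would replace $\eta$ by its primitive component and reread the proof with that choice. This verification is the only real obstacle, namely making sure the degree bookkeeping of the $n=4m-1$ argument applies verbatim with $m=1$.

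\textbf{Fourfolds} ($n=4$). Width $\frac n2-1=1$ requires, for $k<3$, $H^1=0$ and $H^2=\C[\omega]$. This holds when $b_1=0$ and the Picard rank is $1$. The cohomology in degrees $3,4,5$ is free, so Theorem \ref{centralcohomology} applies directly.

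In all cases $n\ge2$, and the K\"ahler class $\omega$ can be taken as a multiple of the ample generator (or any K\"ahler class in the rank-two case). So Theorem \ref{centralcohomology}, which itself relies on Theorem \ref{half}, yields strong formality.
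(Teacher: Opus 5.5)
Your proposal is correct and matches the paper, which obtains the corollary directly from Theorem~\ref{centralcohomology}: Fano manifolds have $b_1=0$ and $b_2=h^{1,1}=\rho$, so rank-one threefolds and fourfolds have central cohomology of width $\frac n2-1$, and rank-two threefolds fall under the clause $n=4m-1$, $m=1$, $h^{1,1}=2$. One slip in your optional re-check of the $m=1$ case: on a threefold $L\colon H^2\to H^4$ is an isomorphism, so $\omega\wedge P^2\neq0$, and the relation killed by $\xi$ is $\eta^2=a\,\omega\eta+b\,\omega^2$ with $\eta$ primitive. This is exactly the paper's formula with $j=1,2$, so the argument is unaffected.
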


This reasoning can furthermore be applied to other classes of not necessarily K\"ahler manifolds. Consider a compact, connected and holomorphically simply connected $n$-dimensional $\ddbar$-manifold $X$ with no non-trivial multiplicative relations in cohomology below degree $n+2$, in the sense that the multiplication map
\begin{align*}
	\mathrm{Sym}^k(H^{\le k}_{BC}(X))&\to H^k_{BC}(X)\\
	[\alpha]_{BC}\otimes[\beta]_{BC}&\mapsto[\alpha\wedge\beta]_{BC}
\end{align*}
is injective for all $k<n+2$. Here, $\mathrm{Sym}(H^{\le k}_{BC}(X))$ denotes the graded symmetric algebra generated by $H_{BC}^{\le k}(X)$. One can construct a partial model following the previous proof and complete it to a minimal model applying the algorithm in \cite[Theorem 2.34]{Stelzig_2025}. Since the algorithm does not affect elements in degree $\le n-1$ and since the partial model is trivially $(n-1)$-strongly formal, the same holds for the complete model. Consequently, if the manifold is compact, connected and holomorphically simply connected, Theorem \ref{half} ensures its strong formality. These observations are summarized in the following result.

\begin{theorem}\label{generalthm}
	A compact, connected and holomorphically simply connected $\ddbar$-manifold of complex dimension $n$ with no multiplicative relations in cohomology below degree $n+2$ is strongly formal.
\end{theorem}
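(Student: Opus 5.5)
The plan is to build an explicit partial bigraded model whose generators in total degree $\le n-1$ are all closed. Such a model is trivially $(n-1)$-strongly formal, and then Theorem \ref{half} finishes the argument. Since $X$ is holomorphically simply connected and satisfies the $\ddbar$-Lemma, $H^1_{BC}(X)=H^1_A(X)=0$, and Theorem \ref{thmexistence} provides a simply connected bigraded minimal model unique up to isomorphism. It therefore suffices to exhibit one minimal model which is $(n-1)$-strongly formal.

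For each $k\le n+1$, choose a bigraded complement $W^k$ of the image of the multiplication map $\bigoplus_{i+j=k,\,i,j\ge 2}H^i_{BC}\otimes H^j_{BC}\to H^k_{BC}(X)$, that is, the indecomposable part of cohomology. Pick $\partial$- and $\bar{\partial}$-closed pure-bidegree representatives of a basis of each $W^{p,q}$. Set $\mathcal{M}_2:=\Lambda(\bigoplus_{k\le n+1}W^k)$ with zero differentials, and let $\varphi_2:\mathcal{M}_2\to A_X$ send each generator to its representative. By the hypothesis of no multiplicative relations below degree $n+2$, the induced map $\mathrm{Sym}(H^{\le k}_{BC})\to H^k_{BC}$ is injective for $k\le n+1$. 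It is also surjective, by construction of the $W$'s and induction on $k$. Hence $H_{BC}^{\le n+1}(\varphi_2)$ is an isomorphism. The $\ddbar$-Lemma on $X$ identifies $H_{BC}$, $H_A$ and $H_{dR}$, and $\mathcal{M}_2$ has zero differentials, so $H_A^{\le n+1}(\varphi_2)$ is an isomorphism as well.

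Next, run the inductive algorithm of \cite[Theorem 2.34]{Stelzig_2025} starting from $(\mathcal{M}_2,\varphi_2)$. As recalled in the proof of Theorem \ref{centralcohomology}, the algorithm fixes degree $k$ once $H_{BC}^{\le k+1}$ is injective and $H_A^{\le k-1}$ is surjective. At degree $k$ it only adds generators of degrees $k$ and $k-1$. Since we already have isomorphisms up to degree $n+1$, the first new generators appear in degree $\ge n$, killing Bott-Chern classes in degree $n+2$. The result is a minimal model $\mathcal{M}=\Lambda V$ with $V^{\le n-1}=\bigoplus_{k\le n-1}W^k$ and $d(V^{\le n-1})=0$. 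The main point to check carefully is exactly this claim that nothing of degree $\le n-1$ gets altered. This requires that injectivity in $H_{BC}^{n+1}$, and not merely in $H_{BC}^{n}$, already holds. That is precisely where the bound $n+2$ in the hypothesis is used, and it is the step I expect to need the most care, in particular checking that the algorithm's degree-$(n-1)$ additions are triggered only by failure of injectivity in $H^{n+1}_{BC}$.

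Finally, set $C^{p,q}=V^{p,q}$ and $N^{p,q}=0$ for $p+q\le n-1$. Then $I_{n-1}=0$, so conditions \ref{iii} and \ref{(iv)} of Definition \ref{sstrong} hold vacuously. The $\ddbar$-Lemma up to degree $n-1$ follows from the global $\ddbar$-Lemma of $X$, transported to the model via the quasi-isomorphism. Thus $\mathcal{M}$ is $(n-1)$-strongly formal. Since $H^1_{dR}(X)=0$ (from $H^1_A=0$ and the $\ddbar$-Lemma), Theorem \ref{half} applies and $X$ is strongly formal.
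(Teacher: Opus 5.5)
Your proposal is correct and follows the paper's own argument. You build a partial model freely generated by closed representatives of the indecomposable cohomology, then complete it with the algorithm of \cite[Theorem 2.34]{Stelzig_2025}, which only adds generators in degree $\ge n$. As a result $V^{\le n-1}$ consists of closed generators, $I_{n-1}=0$, and Theorem \ref{half} applies. Your explicit inclusion of $W^{n+1}$ and your observation that injectivity on $H^{n+1}_{BC}$ is exactly what rules out new generators in degree $n-1$ make precise two points the paper leaves implicit.
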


As a direct application of this result, we establish the strong formality of $\ddbar$-complex structures on $\mathbb{S}^3\times\mathbb{S}^3$. On this class of manifolds, multiplicative relations in cohomology only occur in degree $6$, meaning that the hypotheses of Theorem \ref{generalthm} are satisfied. In particular, we find in this class Clemens manifolds with vanishing second Betti number (see \cite{Friedman91} for an explicit construction). These simply connected $3$-dimensional complex manifolds are compact and not necessary K\"ahler; nevertheless, they satisfy the $\ddbar$-Lemma, as shown in \cite[Theorem 1.1]{Chi24}. 

\begin{corollary}
	Clemens manifolds with vanishing second Betti number are strongly formal.
\end{corollary}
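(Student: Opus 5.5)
The plan is to apply Theorem \ref{generalthm}, so I need to check its hypotheses for a Clemens manifold $X$ with $b_2(X)=0$. These manifolds are diffeomorphic to connected sums $\#_k(\mathbb{S}^3\times\mathbb{S}^3)$, obtained by Friedman's construction \cite{Friedman91}.

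\emph{Basic hypotheses.} $X$ is a compact, simply connected complex threefold, so $n=3$. By \cite[Theorem 1.1]{Chi24}, $X$ satisfies the $\ddbar$-Lemma. Simple connectivity gives $H^1_{dR}(X)=0$. Combined with the $\ddbar$-Lemma, which identifies $H^1_A(X)$ with the degree-$1$ part of de Rham cohomology, this gives holomorphic simple connectedness. Hence Theorem \ref{thmexistence} provides a minimal model.

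\emph{No multiplicative relations below degree $n+2=5$.} The rational cohomology of $\#_k(\mathbb{S}^3\times\mathbb{S}^3)$ vanishes in degrees $1,2,4,5$. It is nonzero only in degrees $0$, $3$ and $6$. By the $\ddbar$-Lemma, $H^k_{BC}(X)\cong H^k_{dR}(X,\C)$ in each total degree, so the same vanishing holds for Bott-Chern cohomology. Now consider the multiplication maps $\mathrm{Sym}^k(H^{\le k}_{BC})\to H^k_{BC}$ for $k<5$:
\begin{itemize}
\item In degrees $1,2,4$, both sides vanish, because any product would have to involve classes of degree $1$ or $2$, or two classes of degree $\ge 2$. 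The only positive-degree classes have degree $3$, and $3+3=6$.
\item In degree $3$, the symmetric algebra contributes only the generators themselves, and these map identically onto $H^3_{BC}$.
\end{itemize}
So no non-trivial relation occurs before degree $6$, and the first products $H^3\cdot H^3\to H^6$ lie beyond the threshold.

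\emph{Conclusion.} Theorem \ref{generalthm} now applies and gives the strong formality of $X$. The only delicate point is transferring the topological vanishing to Bott-Chern cohomology, which is exactly where the $\ddbar$-Lemma of \cite{Chi24} is used.
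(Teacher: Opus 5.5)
Your proposal is correct and is essentially the paper's argument: you apply Theorem \ref{generalthm}, using the $\ddbar$-Lemma from \cite[Theorem 1.1]{Chi24}, holomorphic simple connectedness deduced from $H^1_{dR}(X)=0$, and the fact that cohomology is concentrated in degrees $0,3,6$, so the first nontrivial products occur in degree $6\ge n+2$. That vanishing of $b_1,b_2,b_4,b_5$ needs only simple connectivity, $b_2=0$ and Poincar\'e duality, so you do not actually need the diffeomorphism type $\#_k(\mathbb{S}^3\times\mathbb{S}^3)$ (which the paper phrases loosely as complex structures on $\mathbb{S}^3\times\mathbb{S}^3$).
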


The next class of K\"ahler manifolds under consideration arises from the Lefschetz hyperplane Theorem. We shall use this result to establish the strong formality of certain hypersurfaces of an $(n-1)$-strongly formal K\"ahler manifold.

\begin{theorem}[see e.g. \protect{\cite[Proposition 5.2.6]{Huybrechts2005}}]\label{hyperplane}
	Let $X$ be a compact K\"ahler manifold of complex dimension $n+1$, and let $Y\subseteq X$ be a smooth hypersurface such that the induced line bundle $\mathcal{O}(Y)$ is positive. Then the map induced in cohomology by the canonical restriction map 
	\[
		H^k(res):H^k(X;\C)\to H^k(Y;\C)
	\]
	is an isomorphism for $k\le n-1$ and injective for $k=n$.
\end{theorem}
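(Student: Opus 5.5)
This is the classical Lefschetz hyperplane theorem, cited from Huybrechts, and I would follow the standard route through Kodaira--Akizuki--Nakano vanishing. First I pass to relative cohomology. The long exact sequence of the pair $(X,Y)$ shows that the claim is equivalent to the vanishing $H^k(X,Y;\C)=0$ for $k\le n$. By Lefschetz duality, $H^k(X,Y;\C)\cong H_{2n+2-k}(X\setminus Y;\C)$, which in turn is dual to $H^{2n+2-k}_c(X\setminus Y;\C)$. Alternatively, I work directly with the sheaf-theoretic description. On $X$ there is a short exact sequence of complexes
\[
0\to\Omega^\bullet_X(\log Y)(-Y)\to\Omega^\bullet_X\to\Omega^\bullet_Y\to0,
\]
and the first complex computes $H^\bullet(X,Y;\C)$.

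The key input is that $\mathcal O(Y)$ is positive. The Hodge-to-de Rham spectral sequence for the first complex has $E_1$-terms $H^q(X,\Omega^p_X(\log Y)\otimes\mathcal O(-Y))$. It therefore suffices to show that these groups vanish for $p+q\le n$.

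The cleanest way I see to do this avoids log forms. I use the restriction sequences
\[
0\to\Omega^p_X\otimes\mathcal O(-Y)\to\Omega^p_X\to\Omega^p_X|_Y\to0,
\qquad
0\to\Omega^{p-1}_Y\otimes\mathcal O(-Y)|_Y\to\Omega^p_X|_Y\to\Omega^p_Y\to0,
\]
the second coming from the conormal sequence. Kodaira--Akizuki--Nakano vanishing gives $H^q(X,\Omega^p_X\otimes\mathcal O(-Y))=0$ for $p+q<n+1$, since by Serre duality this is the vanishing of $H^{n+1-q}(X,\Omega^{n+1-p}_X\otimes\mathcal O(Y))$ for $(n+1-p)+(n+1-q)>n+1$. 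It also gives $H^q(Y,\Omega^{p-1}_Y\otimes\mathcal O(-Y)|_Y)=0$ for $p-1+q<n$, because $\mathcal O(Y)|_Y$ is positive on the $n$-dimensional manifold $Y$. Chasing the two long exact sequences, the map $H^q(X,\Omega^p_X)\to H^q(Y,\Omega^p_Y)$ is bijective for $p+q\le n-1$ and injective for $p+q=n$. Summing over $p+q=k$ and using the Hodge decomposition on the compact Kähler manifolds $X$ and $Y$ (a smooth hypersurface of a Kähler manifold is Kähler) yields the statement for $H^k(\,\cdot\,;\C)$. Compatibility of the restriction map with the Hodge decomposition is clear, since pullback preserves type.

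The delicate point is the degree bookkeeping in the diagram chase. For injectivity at $p+q=n$, I need $H^q(X,\Omega^p_X(-Y))=0$ at $p+q=n<n+1$, which holds. I also need injectivity of $H^q(\Omega^p_X|_Y)\to H^q(\Omega^p_Y)$, which requires $H^q(Y,\Omega^{p-1}_Y(-Y))=0$ with $p-1+q=n-1<n$, which also holds. For surjectivity at $p+q\le n-1$, I need the next cohomology groups to vanish, namely the $H^{q+1}$ terms with total index $p+q+1\le n$ and $(p-1)+(q+1)\le n-1$. Both again fall in the KAN range. So the main obstacle is simply keeping these index ranges correct.
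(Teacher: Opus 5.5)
Your argument is correct and is essentially the standard proof the paper defers to (Huybrechts, Prop.~5.2.6): factor the restriction through $H^q(Y,\Omega^p_X|_Y)$, use the two short exact sequences with Kodaira--Akizuki--Nakano vanishing (via Serre duality) to get bijectivity for $p+q\le n-1$ and injectivity for $p+q=n$, then sum using the Hodge decomposition; your index bookkeeping checks out. The opening remarks on relative cohomology and logarithmic forms are never used in the actual argument and can simply be dropped.
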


Let $X$ and $Y$ be compact K\"ahler manifolds as in Theorem \ref{hyperplane}. If $X$ is connected, holomorphically simply connected and $(n-1)$-strongly formal, the hypotheses of Theorem \ref{hypThmAlg} are satisfied. Consequently, we obtain the following result.

\begin{theorem}\label{hypThm}
	Let $X$ be a compact, connected K\"ahler manifold of complex dimension $(n+1)\ge3$ with $H_{dR}^1(X)=0$. Let $Y\subseteq X$ be a smooth hypersurface such that the induced line bundle $\mathcal{O}(Y)$ is positive. If $X$ is $(n-1)$-strongly formal, then $Y$ is strongly formal.
\end{theorem}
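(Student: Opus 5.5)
We will deduce the statement from Theorem \ref{hypThmAlg}, applied to the restriction morphism $\rho=res^*:(A_X,\partial,\bar{\partial})\to(A_Y,\partial,\bar{\partial})$ of the cbbas of complex differential forms. It is a cbba morphism, since pullback along the holomorphic inclusion $Y\hookrightarrow X$ preserves bidegrees and commutes with $\partial$ and $\bar{\partial}$. The plan is to check the three hypotheses of Theorem \ref{hypThmAlg}, with $B=A_X$, $A=A_Y$ and $n=\dim_{\C}Y\ge2$.

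\textbf{Hypothesis (\ref{1hyp}).} $X$ is compact K\"ahler, so $A_X$ satisfies the $\ddbar$-Lemma. Together with $H^1_{dR}(X)=0$ this gives $H^1_A(X)=H^{1,0}_A\oplus H^{0,1}_A\simeq H^1_{dR}(X)=0$, so $A_X$ is cohomologically simply connected. Theorem \ref{thmexistence} then provides a simply connected, degree-wise finite dimensional bigraded minimal model, and by assumption this model is $(n-1)$-strongly formal. The augmentation comes from evaluation at a point.

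\textbf{Hypothesis (\ref{2hyp}).} $Y$ is a smooth hypersurface of a K\"ahler manifold, hence compact K\"ahler, so $A_Y$ satisfies the $\ddbar$-Lemma. $Y$ is connected: by Theorem \ref{hyperplane} with $k=0$, $H^0(Y)\simeq H^0(X)=\C$. Then $A_Y$ is $n$-SD. Indeed, the $\ddbar$-Lemma identifies Bott-Chern and Aeppli cohomology with Dolbeault cohomology, which is finite dimensional and concentrated in $0\le p,q\le n$, and $H^{n,n}_A(Y)\simeq\C$. The pairing $H^{p,q}_{BC}\times H^{n-p,n-q}_A\to H^{n,n}_A$ is perfect by Serre duality (or directly via Hodge theory of $\ddbar$-Laplacians on compact complex manifolds).

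\textbf{Hypothesis (\ref{3hyp}).} This is the main step. Theorem \ref{hyperplane} gives the statement only for de Rham cohomology $H^k(res)$, whereas we need it for the Bott-Chern and Aeppli cohomologies $H^k_{BC}$ and $H^k_A$. I would use that on both sides the $\ddbar$-Lemma yields natural isomorphisms
\[
H_{BC}^{p,q}\simeq H_{\bar{\partial}}^{p,q}\simeq H_A^{p,q},\qquad \bigoplus_{p+q=k}H_{BC}^{p,q}\simeq H^k_{dR},
\]
all induced by the identity on forms and hence compatible with the pullback $res^*$. Since $res^*$ preserves bidegrees, the de Rham map respects the Hodge decomposition. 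So it is an isomorphism (respectively injective) in total degree $k$ if and only if the Bott-Chern and Aeppli maps are isomorphisms (respectively injective) in total degree $k$. Theorem \ref{hyperplane} therefore yields isomorphisms for $k\le n-1$ and injectivity for $k=n$, as required. The only delicate point is the naturality of the comparison isomorphisms: each is given by taking a class represented by a $d$-closed pure-type form, and pullback preserves such forms.

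With all three hypotheses verified, Theorem \ref{hypThmAlg} implies that $A_Y$ is strongly formal, i.e. $Y$ is strongly formal. The hypothesis $n+1\ge3$ enters exactly through the requirement $n\ge2$ in Theorem \ref{hypThmAlg}; it also guarantees $H^1_{dR}(Y)\simeq H^1_{dR}(X)=0$, so that $Y$ has a minimal model as used there.
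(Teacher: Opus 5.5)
Your proposal is correct and follows the paper's route: the paper also obtains the result by applying Theorem \ref{hypThmAlg} to the restriction morphism $A_X\to A_Y$, and it simply asserts that the hypotheses are satisfied. Your explicit checks, in particular transferring the Lefschetz hyperplane theorem from de Rham to Bott-Chern and Aeppli cohomology through the natural $\ddbar$-Lemma identifications (which commute with pullback), supply the verification the paper leaves implicit.
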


We remark that the assumption on the dimension of $X$ cannot be removed; indeed, if one considers a generic homogeneous polynomial of degree $d$ in three variables, its zero locus in $\CP^2$ is a Riemann surface of genus $\frac{(d-1)(d-2)}{2}$. In particular, for $d>3$ this construction results in a Riemann surface of genus $>1$, which is never strongly formal, as established in \cite[Theorem A]{placini2024nontrivialmasseyproductscompact}. Moreover, if $(n+1)\ge3$ and $X$ satisfies $H_{dR}^1(X)=0$, Theorem \ref{hyperplane} implies $H_{dR}^1(Y)=0$.

As a corollary of Theorem \ref{hypThm}, we establish the strong formality of generalized complete intersection manifolds with trivial first de Rham cohomology group. Consider a product of complex projective spaces $M=\CP_1^{N_1}\times...\times\CP_k^{N_k}$. A smooth manifold $X\subseteq M$ of complex codimension $m$ is a generalized complete intersection if it is the smooth intersection of the zero loci of $m$ sections of line bundles of the form $\mathcal{O}_1(d_1)\otimes...\otimes\mathcal{O}_k(d_k)$, where $\mathcal{O}_j(d_j):=\mathcal{O}_{\CP_j^{N_j}}(d_j)$.

\begin{corollary}
	Let $X$ be a generalized complete intersection manifold defined by positive line bundles with $H_{dR}^1(X)=0$. Then $X$ is strongly formal.
	\begin{proof}
		Let $X$ be defined by sections in $M=\CP_1^{N_1}\times...\times\CP_k^{N_k}$. Since $M$ is strongly formal by \cite[Proposition 4.21]{milivojevic2024bigradednotionsformalityaepplibottchernmassey}, the smooth zero locus of a positive line bundle on $M$ remains strongly formal by Theorem \ref{hypThm}. Consequently, as long as a generalized complete intersection has trivial first de Rham cohomology group, we can inductively apply Theorem \ref{hypThm} to ensure its strong formality.
	\end{proof}
\end{corollary}

\bibliography{sStrongFormality}

\begin{bibdiv}
\begin{biblist}

\bib{AngTar16}{article}{
      author={Angella, D.},
      author={Tardini, N.},
       title={Quantitative and qualitative cohomological properties for
  non-{K}\"ahler manifolds},
        date={2017},
     journal={Proc. Amer. Math. Soc.},
      volume={145},
      number={1},
       pages={273\ndash 285},
         url={https://gecogedi.dimai.unifi.it/paper/1/},
}

\bib{Angella_2015}{article}{
      author={Angella, D.},
      author={Tomassini, A.},
       title={On {B}ott-{C}hern cohomology and formality},
        date={2015},
        ISSN={0393-0440},
     journal={Journal of Geometry and Physics},
      volume={93},
       pages={52\ndash 61},
         url={http://dx.doi.org/10.1016/j.geomphys.2015.03.004},
}

\bib{cesarino2025aepplibottchernmasseyproductsnonkahler}{misc}{
      author={Cesarino, N.},
      author={Tomassini, A.},
       title={Aeppli-{B}ott-{C}hern {M}assey products on non-{K}\"ahler
  solvmanifolds},
        date={2025},
         url={https://arxiv.org/abs/2512.05894},
        note={\url{https://arxiv.org/abs/2512.05894}},
}

\bib{Chi24}{article}{
      author={Chi, L.},
       title={Polarized {H}odge {S}tructures for {C}lemens {M}anifolds},
        date={2024},
     journal={Mathematische Annalen},
      volume={389},
       pages={525\ndash 541},
}

\bib{DeligneGriffiths1975}{article}{
      author={Deligne, P.},
      author={Griffiths, P.},
      author={Morgan, J.},
      author={Sullivan, D.},
       title={Real homotopy theory of {K}\"ahler manifolds},
        date={1975},
        ISSN={1432-1297},
     journal={Inventiones mathematicae},
      volume={29},
      number={3},
       pages={245\ndash 274},
}

\bib{fernandezmunoz}{article}{
      author={Fernandez, M.},
      author={Muñoz, V.},
       title={Formality of {D}onaldson submanifolds},
        date={2005},
     journal={Math Z.},
      volume={500},
       pages={149\ndash 175},
}

\bib{Friedman91}{article}{
      author={Friedman, R.},
       title={On threefolds with trivial canonical bundle},
        date={1991},
     journal={Proceedings of Symposia in Pure Mathematics},
       pages={103\ndash 134},
}

\bib{Huybrechts2005}{book}{
      author={Huybrechts, D.},
       title={Complex {G}eometry},
    subtitle={An {I}ntroduction},
      series={Universitext},
   publisher={Springer Berlin Heidelberg},
        date={2005},
}

\bib{martínmerchán2025holomorphiclinkingnumbersabc}{misc}{
      author={Martín-Merchán, L.},
      author={Stelzig, J.},
       title={Holomorphic linking numbers, {A}{B}{C} {M}assey products, and
  {C}alabi-{Y}au 3-folds},
        date={2025},
         url={https://arxiv.org/abs/2512.02187},
        note={\url{https://arxiv.org/abs/2512.02187}},
}

\bib{milivojevic2024bigradednotionsformalityaepplibottchernmassey}{article}{
      author={Milivojevic, A.},
      author={Stelzig, J.},
       title={Bigraded notions of formality and
  {A}eppli-{B}ott-{C}hern-{M}assey products},
        date={2024},
     journal={Communications in Analysis and Geometry},
      volume={32},
       pages={2901\ndash 2933},
}

\bib{Neisendorfer78}{article}{
      author={Neisendorfer, J.},
      author={Taylor, L.},
       title={Dolbeault {H}omotopy {T}heory},
        date={1978},
        ISSN={00029947},
     journal={Transactions of the American Mathematical Society},
      volume={245},
       pages={183\ndash 210},
         url={http://www.jstor.org/stable/1998862},
}

\bib{placini2024nontrivialmasseyproductscompact}{misc}{
      author={Placini, G.},
      author={Stelzig, J.},
      author={Zoller, L.},
       title={Nontrivial {M}assey products on compact {K}\"ahler manifolds},
        date={2024},
         url={https://arxiv.org/abs/2404.09867},
        note={\url{https://arxiv.org/abs/2404.09867}},
}

\bib{placini2025strongformalitytorichomogeneous}{misc}{
      author={Placini, G.},
      author={Stelzig, J.},
      author={Zoller, L.},
       title={Strong formality of toric and homogeneous compact {K}\"ahler
  manifolds},
        date={2025},
         url={https://arxiv.org/abs/2510.17288},
        note={\url{https://arxiv.org/abs/2510.17288}},
}

\bib{RUBINI2025105586}{article}{
      author={Rubini, L.},
       title={Some computations on trivial canonical-bundle solvmanifolds},
        date={2025},
        ISSN={0393-0440},
     journal={Journal of Geometry and Physics},
      volume={216},
       pages={105586},
}

\bib{SFERRUZZA2022104470}{article}{
      author={Sferruzza, T.},
      author={Tomassini, A.},
       title={Dolbeault and {B}ott-{C}hern formalities: {D}eformations and
  $\partial\bar{\partial}$-lemma},
        date={2022},
        ISSN={0393-0440},
     journal={Journal of Geometry and Physics},
      volume={175},
       pages={104470},
  url={https://www.sciencedirect.com/science/article/pii/S0393044022000201},
}

\bib{sferruzza2024bottchernformalitymasseyproducts}{article}{
      author={Sferruzza, T.},
      author={Tomassini, A.},
       title={Bott-{C}hern formality and {M}assey products on strong {K}\"ahler
  with torsion and {K}\"ahler solvmanifolds},
        date={2024},
     journal={J. Geom. Anal.},
      volume={34},
      number={348},
}

\bib{sferruzza2025hermitiangeometricallyformalmanifolds}{misc}{
      author={Sferruzza, T.},
      author={Tomassini, A.},
       title={Hermitian geometrically formal manifolds},
        date={2025},
         url={https://arxiv.org/abs/2507.10263},
        note={\url{https://arxiv.org/abs/2507.10263}},
}

\bib{Stelzig_2021}{article}{
      author={Stelzig, J.},
       title={On the structure of double complexes},
        date={2021},
        ISSN={1469-7750},
     journal={Journal of the London Mathematical Society},
      volume={104},
      number={2},
       pages={956\ndash 988},
         url={http://dx.doi.org/10.1112/jlms.12453},
}

\bib{Stelzig_2025}{article}{
      author={Stelzig, J.},
       title={Pluripotential homotopy theory},
        date={2025},
        ISSN={0001-8708},
     journal={Advances in Mathematics},
      volume={460},
       pages={110038},
         url={http://dx.doi.org/10.1016/j.aim.2024.110038},
}

\bib{Sullivan75}{incollection}{
      author={Sullivan, D.},
       title={Differential forms and the topology of manifolds},
        date={1975},
   booktitle={Manifolds---{T}okyo 1973 ({P}roc. {I}nternat. {C}onf., {T}okyo,
  1973)},
   publisher={Math. Soc. Japan, Tokyo},
       pages={37\ndash 49},
}

\bib{tardini2015geometricbottchernformalitydeformations}{article}{
      author={Tardini, N.},
      author={Tomassini, A.},
       title={On geometric {B}ott-{C}hern formality and deformations},
        date={2017},
        ISSN={0373-3114,1618-1891},
     journal={Ann. Mat. Pura Appl. (4)},
      volume={196},
      number={1},
       pages={349\ndash 362},
}

\end{biblist}
\end{bibdiv}

\end{document}